\newtheorem{theorem}{Theorem}
\newtheorem{lemma}[theorem]{Lemma}
\newenvironment{proof}{\noindent{\scshape Proof.}}{\hspace*{2mm} $\square$}
\newcommand{\Z}{\mathbb{Z}}
\newcommand{\R}{\mathbb{R}}
\newcommand{\ind}{\mathbf{1}}
\newcommand{\ep}{\epsilon}
\DeclareMathOperator{\card}{card}
\DeclareMathOperator{\bernoulli}{Bernoulli \,}
\DeclareMathOperator{\binomial}{Binomial \,}
\DeclareMathOperator{\uniform}{Uniform \,}
\DeclareMathOperator{\cont}{cont \,}
\DeclareMathOperator{\ax}{ax}
\begin{document}
\begin{frontmatter}
\title     {Clustering and coexistence in the one-dimensional \\ vectorial Deffuant model}
\runtitle  {One-dimensional vectorial Deffuant model}
\author    {Nicolas Lanchier\thanks{Research partially supported by NSF Grant DMS-10-05282} and
            Stylianos Scarlatos} 
\runauthor {N. Lanchier and S. Scarlatos}
\address   {School of Mathematical and Statistical Sciences \\ Arizona State University \\ Tempe, AZ 85287, USA.}
\address   {Department of Mathematics \\ University of Patras \\ Patras 26500, Greece.}

\maketitle

\begin{abstract} \ \
 The vectorial Deffuant model is a simple stochastic process for the dynamics of opinions that also includes a confidence threshold.
 To understand the role of space in this type of social interactions, we study the process on the one-dimensional lattice where
 individuals are characterized by their opinion -- in favor or against -- about~$F$ different issues and where pairs of nearest
 neighbors potentially interact at rate one.
 Potential interactions indeed occur when the number of issues both neighbors disagree on does not exceed a certain confidence threshold,
 which results in one of the two neighbors updating her opinion on one of the issues both neighbors disagree on (if any).
 This paper gives sufficient conditions for clustering of the system and for coexistence due to fixation in a fragmented configuration,
 showing the existence of a phase transition between both regimes at a critical confidence threshold.
\end{abstract}

\begin{keyword}[class=AMS]
\kwd[Primary ]{60K35}
\end{keyword}

\begin{keyword}
\kwd{Interacting particle systems, clustering, coexistence, annihilating random walks.}
\end{keyword}

\end{frontmatter}


\section{Introduction}\label{sec:introduction}

\indent In the voter model~\cite{clifford_sudbury_1973, holley_liggett_1975}, individuals are located on the vertex set of a graph
 and are characterized by one of two competing opinions.
 Individuals update their opinion independently at rate one by mimicking a random neighbor where the neighborhood is defined in an
 obvious manner from the edge set of the graph.
 This is the simplest model of opinion dynamics based on the framework of interacting particle systems.
 The model includes social influence, the tendency of individuals to become more similar when they interact.
 More recently, and particularly since the work of political scientist Axelrod~\cite{axelrod_1997}, a number of variants of the
 voter model that also account for homophily, the tendency of individuals to interact more frequently with individuals who
 are more similar, have been introduced.
 These spatial processes are continuous-time Markov chains whose state at time $t$ is a function that maps the vertex set $V$ of a graph
 into a set of opinions:
 $$ \eta_t : V \ \longrightarrow \ \Gamma \ := \ \hbox{opinion set}. $$
 The common modeling approach is to equip~$\Gamma$ with a metric, which allows to define an opinion distance between neighbors,
 and to include homophily by assuming that neighbors interact at a rate which is a nonincreasing function of their opinion distance.
 This rate is often chosen to be the step function equal to zero if the opinion distance between the two neighbors is larger
 than a so-called confidence threshold and equal to one otherwise. \vspace*{8pt}

\noindent{\bf Model description} --
 In the original version of the Deffuant model \cite{deffuant_al_2000}, the opinion space is the unit interval equipped with
 the Euclidean distance.
 Neighbors interact at rate one if and only if the distance between their opinion does not exceed a certain confidence threshold,
 which results in a compromise strategy where both opinions get closer to each other by a fixed factor.
 The main conjecture about this opinion model is that, at least when the initial opinions are independent and uniformly distributed
 over the unit interval, the system reaches a consensus when the confidence threshold is larger than one half whereas disagreements
 persist in the long run when the confidence threshold is smaller than one half.
 This conjecture has been completely proved for the one-dimensional system in~\cite{haggstrom_2012, lanchier_2012b} using different
 techniques and we also refer to~\cite{haggstrom_hirscher_2014} for additional results on the system in higher dimensions and/or
 starting from more general distributions.
 In contrast, in the vectorial version of the Deffuant model also introduced in~\cite{deffuant_al_2000}, the opinion space is the
 hypercube equipped with the Hamming distance:
 $$ \Gamma \ := \ \{0, 1 \}^F \quad \hbox{and} \quad H (u, v) \ := \ \card \,\{i : u_i \neq v_i \} \quad \hbox{for all} \quad u, v \in \Gamma. $$
 As for the general class of opinion models described above, the system depends on a confidence threshold that we call~$\theta$
 from now on.
 To describe the dynamics, we also introduce
 $$ \Omega (x, y, \eta) \ := \ \{u \in \Gamma : H (u, \eta (y)) = H (\eta (x), \eta (y)) - 1 \} $$
 for each pair of neighbors $x$ and $y$ and each configuration $\eta$.
 The vectorial Deffuant model can then be formally defined as the continuous-time Markov chain with generator
\begin{equation}
 \label{eq:model}
 \begin{array}{rcl}
   Lf (\eta) & = & \sum_x \,(\card \,\{y : y \sim x \})^{-1} \,\sum_{y \sim x} \,(\card \,\Omega (x, y, \eta))^{-1} \vspace*{4pt} \\
             &   & \hspace*{40pt} \sum_{u \in \Omega (x, y, \eta)} \,\ind \{1 \leq H (\eta (x), \eta (y)) \leq \theta \} \ [f (\eta_{x, u}) - f (\eta)] \end{array}
\end{equation}
 where $y \sim x$ means that both vertices are nearest neighbors and where~$\eta_{x, u}$ is the configuration obtained from
 configuration~$\eta$ by setting the opinion at~$x$ equal to~$u$ and leaving all the other opinions unchanged.
 In words, each individual looks at a random neighbor at rate one and updates her opinion by moving one unit towards the opinion
 of this neighbor along a random direction in the hypercube unless either the opinion distance between the two neighbors exceeds
 the confidence threshold or both neighbors already agree.
 These evolution rules, which are somewhat complicated thinking of each opinion as an element of the hypercube, have a very natural
 interpretation if one thinks of each opinion as a set of binary opinions -- in favor or against -- about $F$ different issues.
 Using this point of view gives the following:
 each individual looks at a random neighbor at rate one and imitates the opinion of this neighbor on an issue selected uniformly at
 random among the issues they disagree on (if any), which models social influence, unless the number of issues they disagree on
 exceeds the confidence threshold, which models homophily. \vspace*{8pt}

\noindent{\bf Main results} --
 The results in~\cite[section 4]{deffuant_al_2000} are based on numerical simulations of the system on a complete graph where
 all the individuals are neighbors of each other, thus leaving out any spatial structure.
 In contrast, the main objective of this paper is to understand not only the role of the parameters but also the role of explicit
 space in the long-term behavior of the system.
 In particular, we specialize from now on in the one-dimensional system where each individual has exactly two nearest neighbors
 and set $V = \Z$.
 In this case, the process can exhibit two types of behavior:
 reach a consensus or get trapped in an absorbing state where the different opinions coexist.
 To define mathematically this dichotomy, we say that
\begin{itemize}
 \item the system {\bf clusters} whenever $\lim_{t \to \infty} P \,(\eta_t (x) = \eta_t (y)) = 1$ for all $x, y \in \Z$, \vspace*{2pt}
 \item the system {\bf coexists} due to fixation whenever
  $$ P \,(\eta_t (x) = \eta_{\infty} (x) \ \hbox{eventually in $t$}) \ = \ 1 \quad \hbox{for all} \quad x \in \Z $$
  for some configuration $\eta_{\infty}$ such that $P \,(\card \,\{\eta_{\infty} (x) : x \in \Z \} = 2^F) = 1$.
\end{itemize}
 First, we note that, when $F \leq \theta$, the process reduces to a superposition of $F$ voter models:
 the configuration of opinions for each given issue evolves according to a voter model which is slowed down by a factor equal
 to the number of issues on which neighbors disagree.
 In particular, it directly follows from~\cite{clifford_sudbury_1973, holley_liggett_1975} that the system clusters.
 In the nontrivial case $F > \theta$, the system has been studied numerically by the authors of~\cite{adamopoulos_scarlatos_2012}
 who considered a percolation model starting from the uniform product measure and predicted a phase transition
 for the continuous-time model between the regimes of clustering and coexistence at an approximate critical
 value $\theta_c \approx F/2$.
 To begin with, we follow~\cite{adamopoulos_scarlatos_2012} and study the one-dimensional system starting from the uniform product
 measure in which the opinions at different vertices are independent and equally likely, i.e.,
\begin{equation}
\label{eq:uniform}
  P \,(\eta_0 (x) = u) \ = \ (1/2)^F \quad \hbox{for all} \quad x \in \Z \ \ \hbox{and} \ \ u \in \Gamma.
\end{equation}
 The first key ingredient to prove both clustering and coexistence is to think of each opinion profile as a collection of~$F$~levels
 each having two possible states and put a particle between two neighbors at the levels they disagree on.
 This induces a coupling between the dynamics of opinions and a system of annihilating random walks similar to the one introduced
 in~\cite{lanchier_schweinsberg_2012}.
 The inclusion of a confidence threshold in the opinion model translates into the following in the system of random walks:
 particles jump at a positive rate except the ones that are part of a pile whose size exceeds the confidence threshold which do not
 move because they are carried by an edge connecting two individuals who disagree too much to interact.
 We call a particle either active or frozen depending on whether it jumps at a positive rate or cannot jump at all.
 When only individuals who disagree on all issues cannot interact, only piles of exactly $F$ particles are frozen and the machinery
 in~\cite{lanchier_schweinsberg_2012}, which partly relies on a delicate symmetry argument due to Adelman~\cite{adelman_1976},
 easily extends to prove that each frozen particle will, after an almost surely finite time, either become active or annihilate
 with an active particle.
 From this, it can be deduced that both frozen and active particles ultimately go extinct, which is equivalent to clustering
 therefore we have the following result.
\begin{theorem} --
\label{th:flux}
 Assume~\eqref{eq:uniform} and $F = \theta + 1$. Then, the system clusters.
\end{theorem}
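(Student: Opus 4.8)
The plan is to make rigorous the coupling sketched in the introduction, then run the "annihilating random walks with frozen piles" argument in the special case $F = \theta + 1$, where the only frozen configuration at an edge is a full pile of $F$ particles. First I would fix the graphical representation: to each edge of $\Z$ attach independent Poisson clocks governing the update attempts, and realize both the opinion process $\eta_t$ and the particle system $\xi_t$ on this common probability space. Here $\xi_t$ places, for each level $i \in \{1,\dots,F\}$ and each edge $e = (x,x+1)$, a particle if $\eta_t(x)_i \neq \eta_t(x+1)_i$. One checks directly that an opinion update at $x$ towards $y$ on a disagreed issue $i$ corresponds to the level-$i$ particle on edge $(x,y)$ jumping one step to the left or to the right (onto the other edge incident to $x$), with two particles on the same level annihilating when they land on the same edge. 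The confidence-threshold indicator $\ind\{1 \le H \le \theta\} = \ind\{1 \le H \le F-1\}$ translates into: a particle is \emph{frozen} precisely when it sits in a pile of all $F$ levels on one edge, and is \emph{active} otherwise; active particles jump at a positive rate, frozen ones do not move at all.

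Next I would show that each frozen pile is eventually dissolved. A pile of size $F$ on edge $e$ is frozen only as long as no particle enters or leaves; but $e$'s neighboring edges, whenever they carry a particle on some level not present on... in fact whenever they carry any particle, that particle is active (its pile has size $< F$), so it jumps at positive rate and can land on $e$, thereby changing the pile. The clean way to phrase this is the symmetry/absorption argument of Adelman cited via \cite{lanchier_schweinsberg_2012}: consider the leftmost frozen particle; to its immediate left either there are no particles (so the region left of $e$ is monochromatic on every level and the pile's left boundary behaves like an active particle adjacent to empty space) or there is a nearest particle which is necessarily active; in either case an almost surely finite time suffices for that particle either to annihilate a particle of the pile or to be joined by enough particles to turn a former neighbor active. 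The key input I will quote is that in a system of annihilating random walks on $\Z$ with at least one mobile particle, any two particles meet in finite time a.s.; combined with a parity/counting bound this forces every frozen pile to be destroyed after an a.s.\ finite time, at which point all its particles become active (or annihilate).

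Then I would deduce extinction of the particle system. Once no pile stays frozen forever, the process is, after a random but a.s.\ finite time on any finite window, a system of rate-bounded-below annihilating random walks; by the standard recurrence argument (each pair of neighboring particles annihilates in finite time, and new particles are only created by... are never created, only moved and annihilated), for every pair $x,y$ the number of particles on levels between $x$ and $y$ hits zero and the probability that $\eta_t(x) = \eta_t(y)$ tends to $1$. This is exactly the definition of clustering, giving Theorem~\ref{th:flux}. Finally, I should record why the uniform initial law \eqref{eq:uniform} is used: it guarantees that on each level the initial particle configuration is a nondegenerate product measure (density $1/2$ of disagreements), so there are infinitely many particles on each level a.s.\ and no level is frozen from the start on an unbounded region; this is what lets the meeting-time arguments apply on every finite window.

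\textbf{The main obstacle} I anticipate is the frozen-pile dissolution step: unlike pure annihilating random walks, here the dynamics is genuinely interactive because whether a particle is active depends on the \emph{joint} occupation of an edge across all $F$ levels, so the levels are coupled and one cannot argue level by level. Handling this requires isolating the right comparison object — e.g.\ tracking the motion of the boundary between a frozen block and the rest, and showing it performs (a time-change of) a random walk that is certain to collide with a neighboring particle — and checking that the symmetry argument of Adelman, designed for a single family of annihilating walks, survives the presence of immobile obstacles. I expect this to be where \cite{lanchier_schweinsberg_2012} does the real work and where our proof must verify that the case $F = \theta+1$ meets its hypotheses (in particular that frozen piles have the maximal size $F$, so a frozen particle can never be "shielded" on both sides by other frozen particles forever).
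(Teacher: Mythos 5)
Your overall strategy coincides with the paper's: build the coupling with the level-wise annihilating random walks, observe that for $F=\theta+1$ the only frozen configuration is a full pile of $F$ particles, prove that every blockade breaks in a.s.\ finite time, and deduce extinction of the particles, which is equivalent to clustering. The paper, like you, ultimately defers the heavy lifting to the arguments of Lanchier--Schweinsberg. However, your sketch of both key steps contains genuine gaps.

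\textbf{Blockade destruction.} Your claim that the nearest particle to a frozen pile ``is necessarily active'' is false: it may belong to another blockade. Your quoted input, that in an annihilating system with a mobile particle ``any two particles meet in finite time a.s.,'' is also false: a designated active particle can be annihilated by a third particle, or become frozen by joining a pile of size $F$, before it ever reaches the blockade you care about. (Also, under the product measure there is no ``leftmost frozen particle'' on $\Z$.) The actual argument pairs the designated blockade $e_{\star}$ with a second blockade $e^{\star}$ chosen via a translation/reflection construction so that, with probability at least $1/2$, the particle counts at two different levels in the interval between them have different parities. Since annihilation preserves parity level by level, this guarantees that at least one \emph{active} particle persists strictly between the two blockades until one of them breaks, which forces one of them to break in finite time. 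Adelman's symmetry argument is then needed for a second, distinct purpose: to show that it is the \emph{designated} blockade $e_{\star}$ that breaks with conditional probability $1/2$ (via reflection invariance of the configuration in the two matched intervals), giving success probability at least $1/8$ per round and a geometric number of rounds. Your sketch conflates these two steps and replaces them with a meeting-time statement that does not hold.

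\textbf{Extinction.} Knowing that each individual blockade breaks in finite time does \emph{not} yield a random finite time after which a given window contains no blockades: active particles can collide and form new piles of size $F$ at arbitrarily late times, so the process never reduces to a ``rate-bounded-below'' annihilating system on any window. The paper instead runs a density argument: the expected number of particles per edge is nonincreasing, hence the expected numbers of active and of frozen particles per edge both converge; if the active density had a positive limit, recurrence of one-dimensional symmetric walks would force a uniformly positive rate of annihilation or freezing events per edge for all time, driving one of these densities to $\pm\infty$, a contradiction; if the frozen density had a positive limit, blockade destruction would force a uniformly positive annihilation rate, again a contradiction. This flux/monotonicity argument is absent from your proposal and cannot be replaced by the ``standard recurrence argument'' you invoke.
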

\begin{figure}[t]
\centering
\scalebox{0.32}{\input{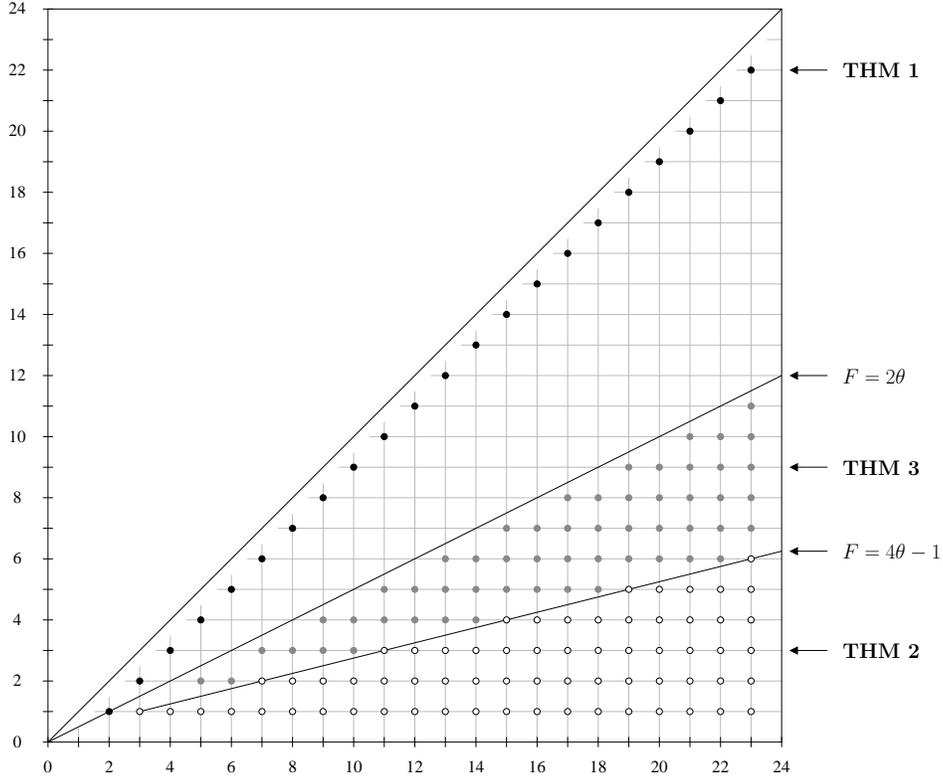}}
\caption{\upshape{Phase diagram of the one-dimensional vectorial Deffuant model in the $F - \theta$ plane along with a summary
 of our theorems.
 The black dots correspond to the set of parameters for which clustering is proved whereas the white dots
 correspond to the set of parameters for which coexistence is proved.}}
\label{fig:diagram}
\end{figure}
 To study the coexistence regime, we again use the coupling with annihilating random walks as well as a characterization of fixation
 based on certain spatial properties of so-called active paths that keep track of the offspring of the binary opinions initially
 present in the system.
 This characterization leads to a sufficient condition for survival of the frozen particles on a large interval, and therefore
 coexistence due to fixation, based on the initial number of active and frozen particles in this interval.
 Using estimates on the random number of active particles that annihilate with frozen particles to turn a pile of frozen particles
 into a smaller pile of active particles, a random weight is then attributed to each pile of particles at time zero.
 This, together with large deviation estimates for the cumulative weight in large intervals, implies that the system coexists
 whenever the expected value of the weight of a typical pile is positive.
 Relying finally on some symmetry property of the binomial random variable allows to make explicit the set of parameters for which
 the expected value of the weight is positive, from which we deduce the following theorem.
\begin{theorem} --
\label{th:uniform-fix}
 Assume~\eqref{eq:uniform} and $F \geq 4 \theta - 1$. Then, the system coexists.
\end{theorem}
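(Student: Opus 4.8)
\medskip

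\noindent {\scshape Proof proposal.} The plan is to run the whole argument inside the coupled particle system: on each edge $\{x, x+1\}$ of $\Z$ and at each time $t$ we place a pile of particles coloured by the issues on which $\eta_t (x)$ and $\eta_t (x+1)$ disagree, so that the pile size equals $H (\eta_t (x), \eta_t (x+1))$, and the dynamics~\eqref{eq:model} become, as described above, a system of colour-indexed annihilating random walks in which a particle jumps at a positive rate when its pile has size at most $\theta$ --- call it active --- and is otherwise frozen. Two preliminary observations will be used repeatedly. First, under~\eqref{eq:uniform} the increments $\eta_0 (x) \oplus \eta_0 (x+1)$ are i.i.d.\ and uniform on $\{0, 1 \}^F$, hence the initial pile sizes are i.i.d.\ $\binomial (F, 1/2)$ variables, and, conditionally on a pile having size $s$, its set of colours is uniform among the $s$-element subsets of $\{1, \dots, F \}$. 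Second, an edge whose pile stays frozen for all time blocks every transmission of opinion across it and therefore splits $\Z$ into two non-interacting half-lines; since a finite interval flanked by two such edges evolves as a finite-state Markov chain and thus reaches an absorbing configuration almost surely, it suffices to produce, almost surely, infinitely many permanently frozen edges on each side of the origin.

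\medskip

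\noindent The core of the proof is to convert the active-path description of fixation into a local sufficient condition on the initial configuration. An active path records a space-time trajectory along which a binary opinion initially present in the system has been transmitted, and such a path cannot cross an edge that has stayed frozen long enough; I would first make this quantitative and then show that if a window $W$ of large enough length carries, in the initial configuration, a sufficiently large surplus of frozen particles over active ones, then no active path ever crosses $W$, so that some edge inside $W$ remains frozen for all time. The precise surplus is dictated by the melting mechanism: to bring a frozen pile of size $s$ down into the active regime one must feed at least $s - \theta$ particles into it, and the random number of particles actually consumed before such a pile has turned into an active pile is controlled through the induced birth-and-death chain on $\{\theta, \dots, F \}$, in which an arriving particle matches a colour already present --- and so annihilates one of the pile's particles --- with probability $s'/F$ when the pile has current size $s'$. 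I expect this step to be the main obstacle: one has to rule out that active particles manufactured outside $W$ drift in and complete the melting, which is exactly where the one-dimensional geometry --- a large frozen pile genuinely insulates its two sides --- and the confinement of active paths are combined, and where the melting estimates must be quantitatively strong enough.

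\medskip

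\noindent Granting this sufficient condition, I would linearise it by attaching to the pile at each edge $e$ at time zero a weight $w_e$ --- negative when the pile is active, of magnitude comparable to the pile size, since it is a source of active particles; positive when the pile is frozen, reflecting a lower bound (drawn from the melting estimates) on the number of active particles it consumes before melting, so that large frozen piles carry the most weight; and zero on empty edges --- chosen so that $\sum_{e \in W} w_e > 0$ forces the condition of the previous paragraph. Under~\eqref{eq:uniform} the weights $(w_e)$ are i.i.d., being functions of the i.i.d.\ increments together with independent randomisation used in the melting estimate, so a Cram\'er-type large deviation estimate shows that a window of length $L$ carries a positive cumulative weight with probability at least $1 - e^{-cL}$ as soon as the common mean is positive; applying this to disjoint windows of growing length and invoking independence and the Borel--Cantelli lemma produces, almost surely, infinitely many permanently frozen edges on each side of the origin, hence fixation. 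That the limiting configuration $\eta_{\infty}$ then exhibits all $2^F$ opinions follows from a short ergodicity argument: $\eta_{\infty}$ is translation invariant and ergodic, and each opinion has a positive probability of occurring at a prescribed vertex (by symmetry), so almost surely every opinion is present.

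\medskip

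\noindent It remains to identify the parameter range in which the common mean $E [w_e]$ is positive. Writing $S$ for a $\binomial (F, 1/2)$ variable, $E [w_e]$ is an explicit linear combination of $P (1 \leq S \leq \theta)$ and of weighted upper tails of $S$, and the plan is to evaluate it using the symmetry of $\binomial (F, 1/2)$ about $F/2$: pairing the atom at $s$ with the atom at $F - s$ collapses the combination and shows that the frozen contribution dominates the active one precisely when the frozen part of the distribution reaches far enough below $F/2$, which rearranges to the inequality $F \geq 4 \theta - 1$. For every such pair $(F, \theta)$ the weight has positive mean, the system fixates in a fragmented configuration, and Theorem~\ref{th:uniform-fix} follows.
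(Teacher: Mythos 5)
Your overall architecture coincides with the paper's: couple with the colour-indexed annihilating walks, characterize fixation through active paths, confine the melting of blockades in a window to the active particles initially inside it, attach an i.i.d.\ weight to each edge, run a large deviation bound, and decide the sign of the mean weight via the symmetry of $\binomial(F,1/2)$ about $F/2$. The confinement step you flag as the main obstacle is indeed handled in the paper by bounding the event $H_N$ by $\{\sum_{e} \cont(e) \leq 0\}$ over subintervals, and your sketch of it is essentially right.

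The genuine gap is at the boundary $F = 4\theta - 1$, which is precisely the case the theorem adds over $F \geq 4\theta$. First, your frozen-pile weight must be (particles consumed to melt the pile) \emph{minus} the $\theta$ active particles released once the pile becomes live; with that correction the natural weight of a size-$j$ blockade is $j - 2\theta$, which is negative for $\theta < j < 2\theta$, not positive as you assert. Second, the folding argument $p_j = p_{F-j}$ then does \emph{not} ``rearrange to $F \geq 4\theta - 1$'': the paired terms contribute $F - 4\theta + 2$ on $J_2 = [\theta+1, K_-]$ and $F - 2(\theta + j)$ on $J_1 = [0,\theta]$, and the latter is negative at $j = \theta$ when $F = 4\theta - 1$, so positivity of the mean fails to follow from symmetry alone. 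The paper needs three extra inputs here: (i) an improvement of the blockade weight to $j - 2\theta + 2X_j$ with $X_j = \bernoulli(1 - j/F)$, obtained from a symmetry argument valid only for the \emph{first} active arrival at the blockade (your proposed full birth-and-death analysis of the melting would have to contend with the loss of the uniform-colour property after that first arrival, and with dependence between edges); (ii) for $\theta \geq 2$, a binomial tail estimate plus explicit computation for $2 \leq \theta \leq 6$ to conclude $E\phi(e) > 0$; and (iii) for $\theta = 1$, $F = 3$, where $E\phi(e) = 0$ exactly, a further improvement of the weight of initially \emph{active} edges that credits the event that two active particles meet and form a new blockade before ever reaching an old one. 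Without some version of (i)--(iii), your argument proves the theorem only for $F \geq 4\theta$.
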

 Note that the combination of both theorems implies the existence of at least one phase transition between consensus and coexistence
 at some critical confidence threshold
 $$ \theta_c \in ((1/4)(F + 1), F - 1) \quad \hbox{for all} \quad F \geq 2 $$
 which gives a rigorous proof of part of the conjecture in~\cite{adamopoulos_scarlatos_2012}.
 To gain some insight on the reason why the critical threshold might indeed be $F/2$, we finally look at the system starting from
 a non-uniform product measure where two opposite designated opinion profiles start at high density whereas the other opinion profiles
 start at low density.
 More precisely, we assume that
\begin{equation}
\label{eq:biased}
  \begin{array}{rclcl}
     P \,(\eta_0 (x) = u) & = & 1/2 - (2^{F - 1} - 1) \,\rho & \hbox{when} & u \in \{u_-, u_+ \} \vspace*{2pt} \\
                          & = & \rho & \hbox{when} & u \notin \{u_-, u_+ \} \end{array}
\end{equation}
 where $\rho \in [0, 2^{-F})$ is a small parameter and where
 $$ u_- \ := \ (0, 0, \ldots, 0) \quad \hbox{and} \quad u_+ := (1, 1, \ldots, 1). $$
 For the process starting from such an initial distribution, the methodology developed to proved Theorem~\ref{th:uniform-fix} can again
 be applied which, together with large deviation estimates for non-independent random variables, gives the following result.
\begin{theorem} --
\label{th:biased-fix}
 Assume~\eqref{eq:biased} and $F > 2 \theta$.
 Then, the system coexists for all $\rho > 0$ small.
\end{theorem}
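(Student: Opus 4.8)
The plan is to rerun the proof of Theorem~\ref{th:uniform-fix} almost verbatim, changing only two things: the large deviation input, which must now tolerate dependence, and the final sign computation for the expected weight, which now exploits the degeneration of~\eqref{eq:biased} as $\rho \downarrow 0$. Recall the structure of that proof. Under the coupling with annihilating random walks, a particle is \emph{frozen} when it belongs to a pile of size larger than~$\theta$ and \emph{active} otherwise; coexistence due to fixation follows once one proves that, with positive probability, some pile of frozen particles inside a bounded interval is never destroyed; and this is in turn guaranteed by a geometric condition on the initial numbers of active and frozen particles in a large interval, which the proof packages as a random weight~$w(\pi)$ carried by each pile~$\pi$ present at time zero, together with the implication that
$$ \hbox{if} \quad W(I) \ := \ \sum_{\pi \,:\, \pi \subseteq I} w(\pi) \ > \ 0 \quad \hbox{for some large interval } I, \quad \hbox{then the system coexists}, $$
and with the conclusion that coexistence holds as soon as the expected weight of a typical pile is positive.

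First I would set up this coupling and this weight for the initial law~\eqref{eq:biased}. The one structural change from the uniform case is that~\eqref{eq:biased} is a product measure over the \emph{vertices}~$\Z$, so the piles, being deterministic functions of the opinions at the two endpoints of their edges, are no longer independent across edges; they are, however, finitely dependent, since the piles carried by two edges with no common endpoint are independent, and the weights~$w(\pi)$ inherit a finite range of dependence because each of them reads the configuration only through a window of bounded radius around its edge together with the graphical representation in a bounded space-time region. The large deviation estimate for~$W(I)$ used in the proof of Theorem~\ref{th:uniform-fix} therefore has to be re-established for a finitely dependent sequence, which I would do by a routine blocking argument that reduces matters to an i.i.d.\ sequence of block weights---these are the ``large deviation estimates for non-independent random variables'' mentioned in the introduction. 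I expect this to be the main technical point, not because the large deviation bound is hard in itself, but because it requires verifying that the weight genuinely has bounded range of dependence and that the blocking is compatible with the geometric survival criterion inherited from Theorem~\ref{th:uniform-fix}.

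Next I would determine the sign of the expected weight $E_\rho[w(\pi)]$ of a typical pile under~\eqref{eq:biased}. Since pile sizes never exceed~$F$ and~$w$ depends on a window of bounded size, $E_\rho[w(\pi)]$ is continuous in~$\rho$ at $\rho = 0$. At $\rho = 0$ the initial configuration is simply an i.i.d.\ assignment of~$u_-$ or~$u_+$ to each vertex, so each edge carries either no particle, when its endpoints agree, or a pile of exactly~$F$ frozen particles, when its endpoints are~$u_-$ and~$u_+$; these two possibilities are equally likely and independent across edges. Such a full pile is never reached by an active particle, and its weight equals $F - 2\theta$: releasing it requires $F - \theta$ annihilations with incoming active particles, whereas each frozen pile that is released liberates~$\theta$ freshly activated particles toward the next pile, so the net resistance of a full pile is $(F - \theta) - \theta$. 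Hence $E_0[w(\pi)] = \tfrac12 (F - 2\theta)$, which is positive precisely when $F > 2\theta$; this is where the hypothesis and its strictness come from, since at $F = 2\theta$ one has $E_0[w(\pi)] = 0$ while the minority opinions, which occur at density of order~$\rho$ and can only produce active particles, then push $E_\rho[w(\pi)]$ below zero. By continuity, $E_\rho[w(\pi)] > 0$ for all $\rho > 0$ small as soon as $F > 2\theta$.

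Combining the two steps completes the proof: for such~$\rho$ the cumulative weight $W([-L, L])$ is positive with probability tending to one as $L \to \infty$, so a pile of frozen particles survives forever in a large interval; as in the proof of Theorem~\ref{th:uniform-fix} this yields fixation, and the translation-ergodicity of the construction then forces all $2^F$ opinion profiles to remain present in the limiting configuration, which is the required coexistence.
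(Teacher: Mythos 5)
Your proposal is correct and follows the same architecture as the paper's proof: reuse the fixation criterion and weight machinery from Theorem~\ref{th:uniform-fix}, re-prove the large deviation estimate under the dependent initial law~\eqref{eq:biased}, and then check that the expected weight is positive for small $\rho$. The one place where you genuinely diverge is the large deviation lemma. The paper decomposes the interval weight as $\sum_{u \neq v} h(u,v)\, e_N(u,v)$, where $e_N(u,v)$ counts edges joining opinion $u$ to opinion $v$, and controls each $e_N(u,v)$ by combining a changeover-count estimate for i.i.d.\ coin flips (Lemma~7 of the constrained voter model paper) with a conditional binomial bound. You instead observe that the edge weights form a $1$-dependent stationary bounded sequence (edges sharing no vertex are independent) and invoke a blocking/interleaving reduction to i.i.d.\ sums. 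Both routes give the required $C\exp(-cN\ep^2)$ bound; yours is arguably more elementary and more robust, while the paper's is more explicit about which combinatorial quantities concentrate. Your sign computation is the same as the paper's in disguise: the paper writes $E\phi(e)$ as a degree-two polynomial in $\rho$ with constant term $(1/2)(F-2\theta)$, which is exactly your evaluation at $\rho = 0$ followed by continuity.

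One caution: your bounded-range-of-dependence claim is only valid because, in this biased setting, you (like the paper) must use the worst-case weight $\phi(e) = j - 2\theta$ for blockades, which is a function of the initial opinions at the two endpoints alone. The improved weight of Lemma~\ref{lem:blockade-weight}, with the Bernoulli correction $X_j$ recording the outcome of the first jump of an active particle onto the blockade, is \emph{not} measurable with respect to a bounded space-time window (that first incoming particle can originate arbitrarily far away), so your parenthetical about ``the graphical representation in a bounded space-time region'' would not justify the blocking argument for that refined weight. Since your $\rho = 0$ computation assigns weight $F - 2\theta$ to a full pile, you are implicitly already working with the worst-case weight, so the argument goes through; just make the choice explicit.
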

 Even though the theorem only gives a sufficient condition for coexistence due to fixation, the proof somewhat suggests that this
 condition is also necessary.
 Our intuition relies on the fact that the largest blockades contain~$F$ frozen particles while
 the collision of $F - \theta$ active particles with such a blockade can create a total of $\theta$ active particles.
 In particular, when $F \leq 2 \theta$, it is possible that the number of active particles created is at least equal to the
 number of active particles destroyed, which leads ultimately to a global extinction of all the particles and therefore clustering.
 We refer the reader to Figure~\ref{fig:diagram} for a summary of our results.
 The rest of this paper is devoted to proofs starting in the next section with the coupling with annihilating random walks which is
 then used to show our three theorems in the subsequent three sections.


\section{Coupling with annihilating random walks}
\label{sec:coupling}

\noindent In this section, we follow the approach of \cite{lanchier_schweinsberg_2012} to define a coupling between the process
 and a collection of systems of symmetric annihilating random walks.
 The basic idea is to visualize each opinion profile, i.e., each vertex of the hypercube, using~$F$~levels each having two
 possible states and put particles between two neighbors at the levels they disagree on.
 For an illustration, we refer the reader to Figure~\ref{fig:layers} where black and white dots represent the two possible opinions
 on each issue and where the crosses indicate the position of the particles.
 To make this construction rigorous, we first identify the process with the spin system
 $$ \bar \eta_t : \Z \times \{1, 2, \ldots, F \} \ \to \ \{0, 1 \} \quad \hbox{where} \quad \bar \eta_t (x, i) := \hbox{$i$th coordinate of} \ \eta_t (x). $$
 This again defines a Markov process.
 To describe this system of particles, it is also convenient to identify the edges connecting neighbors with their midpoint
 $$ e \ := \ (x, x + 1) \ \equiv \ x + 1/2 \quad \hbox{for all} \quad x \in \Z $$
 and to define translations on this set of edges by setting
 $$ e + a \ := \ (x, x + 1) + a \ \equiv \ x + 1/2 + a \quad \hbox{for all} \quad e \in \Z + 1/2 \quad \hbox{and all} \quad a \in \R. $$
 The process that keeps track of the disagreements is then defined as another spin system
\begin{equation}
\label{eq:disagreement}
  \xi_t (e, i) \ := \ \ind \,\{\bar \eta_t (e - 1/2, \,i) \neq \bar \eta_t (e + 1/2, \,i) \}
\end{equation}
 and we put a particle on edge $e$ at level $i$ if and only if $\xi_t (e, i) = 1$ to visualize the corresponding configuration
 of interfaces.
 The reason for introducing this system of particles is that
\begin{itemize}
\item the limiting behavior of the process~\eqref{eq:model} can be easily translated into simple properties for the system of particles~\eqref{eq:disagreement}:
 clustering is equivalent to extinction of the particles whereas coexistence is equivalent to survival of the particles and \vspace*{4pt}
\item the particles of~\eqref{eq:disagreement} consist of a collection of systems of simple symmetric annihilating random walks somewhat easier to analyze than
 the vectorial Deffuant model itself.
\end{itemize}
 The number of particles per edge, defined as
 $$ \begin{array}{l} \zeta_t (e) \ := \ \xi_t (e, 1) + \xi_t (e, 2) + \cdots + \xi_t (e, F) \quad \hbox{for each edge} \ e, \end{array} $$
 is a key quantity to fully describe the dynamics of the system of particles since it is equal to the opinion distance between the two
 individuals connected by the edge.
 Indeed,
 $$ \begin{array}{rcl}
    \zeta_t (e) & = & \card \,\{i : \xi_t (e, i) = 1 \} \vspace*{4pt} \\
                & = & \card \,\{i : \bar \eta_t (e - 1/2, \,i) \neq \bar \eta_t (e + 1/2, \,i) \} \vspace*{4pt} \\
                & = &  H (\eta_t (e - 1/2), \eta_t (e + 1/2)). \end{array} $$
 Using that the number of particles on the edge is equal to the opinion distance, we deduce that an interaction along edge $e := (x, x + 1)$
 results in the following alternative:
\begin{enumerate}
 \item There are more than $\theta$ particles on the edge in which case nothing happens because the opinion distance between the
   neighbors exceeds the confidence threshold. \vspace*{4pt}
 \item There are at most $\theta$ particles on the edge in which case one of the issues for which the two neighbors disagree is chosen
   uniformly at random and the opinion of either vertex~$x$ or vertex~$x + 1$ at this level is switched.
   Note that the issues the two neighbors disagree on correspond to the levels which are occupied by a particle so, after the
   interaction, the particle at the chosen level disappears while the state of one of the two edges $e \pm 1$ at the same level
   switches from either empty to occupied or from occupied to empty.
\end{enumerate}
 Combining 1 and 2, we deduce that the system of particles~\eqref{eq:disagreement} evolves at each level according to a system of simple
 symmetric annihilating random walks as illustrated in Figure~\ref{fig:layers}.
 In addition, since the issues on which neighbors disagree are chosen for update uniformly at random, at each edge occupied by
 $j$~particles, these particles jump individually at rate
\begin{equation}
\label{eq:rate-deffuant}
  \begin{array}{rclcl}
   r (j) & = & j^{-1} & \hbox{when} & 0 < j \leq \theta \vspace*{4pt} \\ & = &   0 & \hbox{when} & \theta < j \leq F \end{array}
\end{equation}
 making the~$F$ systems of symmetric annihilating random walks non-independent.
 Motivated by the transition rates in \eqref{eq:rate-deffuant}, we call an edge either a {\bf live edge} or a {\bf blockade} depending on
 whether they have at most or more than $\theta$ particles, respectively.
 Accordingly, we call the particles at this edge either {\bf active} or {\bf frozen} particles, respectively, and notice
 that active particles jump at a positive rate whereas frozen particles cannot jump at all.

\begin{figure}[t]
\centering
\scalebox{0.42}{\input{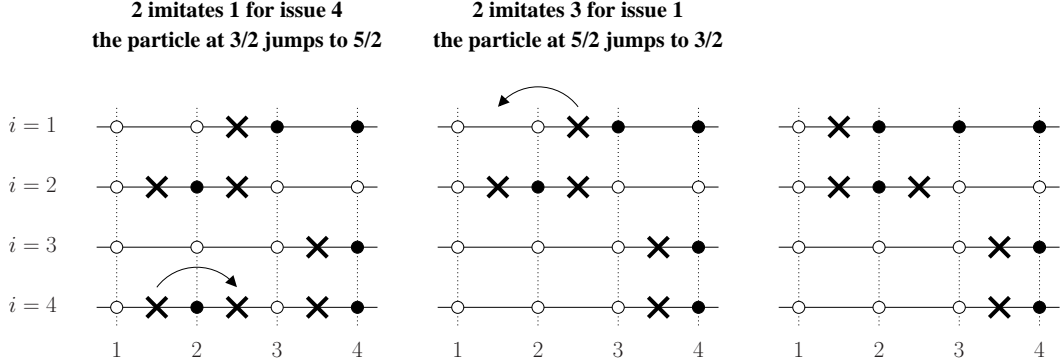}}
\caption{\upshape{Illustration of the coupling between the vectorial Deffuant model and the system of simple symmetric annihilating
 random walks.
 Black and white dots represent the two possible states of the individuals' opinion on each issue while the crosses indicate the
 position of the particles.
 In our example, there are $2^4 = 16$ possible opinion profiles and the confidence threshold is $\theta \geq 2$.
 The two imitation events represented in this realization translate into two consecutive jumps of particles, with the first one
 resulting in the annihilation of two particles.}}
\label{fig:layers}
\end{figure}

\indent Both the vectorial Deffuant model and its coupled systems of annihilating random walks starting from any initial configuration can
 be constructed from the same percolation structure using a standard argument due to Harris \cite{harris_1972}.
 This percolation structure consists of a random graph involving independent Poisson processes marking the times at which potential jumps
 or interactions occur and additional collections of independent Bernoulli random variables and uniform random variables to determine the
 outcome of each jump or interaction.
 To make this construction rigorous, for each pair of individual-issue or vertex-level $(x, i) \in \Z \times \{1, 2, \ldots, F \}$,
\begin{itemize}
 \item we let $(N_{x, i} (t) : t \geq 0)$ be a rate one Poisson process, \vspace*{4pt}
 \item we denote by $T_{x, i} (n)$ its $n$th arrival time: $T_{x, i} (n) := \inf \,\{t : N_{x, i} (t) = n \}$, \vspace*{4pt}
 \item we let $(B_{x, i} (n) : n \geq 1)$ be a collection of independent Bernoulli variables with
  $$ P \,(B_{x, i} (n) = + 1) \ = \ P \,(B_{x, i} (n) = - 1) \ = \ 1/2, $$
 \item and we let $(U_{x, i} (n) : n \geq 1)$ be a collection of independent $\uniform (0, 1)$.
\end{itemize}
 Then, at each time $t := T_{x, i} (n)$, we draw an arrow
\begin{equation}
\label{eq:arrow}
  x \ \to \ y := x + B_{x, i} (n) \quad \hbox{with the label~$i$}
\end{equation}
 and call this arrow an {\bf active} arrow if and only if
\begin{equation}
\label{eq:active}
  \xi_{t-} (e, i) \ = \ 1 \quad  \hbox{and} \quad U_{x, i} (n) \ \leq \ r (\zeta_{t-} (e)) \quad \hbox{where} \quad e \ := \ x + (1/2) \,B_{x, i} (n).
\end{equation}
 The vectorial Deffuant model and the systems of annihilating random walks can then be constructed from the resulting percolation structure by
 assuming that arrows which are not active have no effect on any of the two processes whereas if the $i$-arrow \eqref{eq:arrow} is active \eqref{eq:active} then
\begin{itemize}
 \item at time $t$, the individual at vertex~$y$ looks at the individual at vertex~$x$ and imitates her opinion for the $i$th issue,
   therefore we set $\bar \eta_t (y, i) := \bar \eta_{t-} (x, i)$, \vspace*{4pt}
 \item the particle at $x + (1/2) \,B_{x, i} (n)$ at level~$i$ jumps to $x + (3/2) \,B_{x, i} (n)$.
\end{itemize}
 To establish our coexistence results, it is also useful to identify the vertices where the opinions of the profile of some given vertex at some given time originate from.
 This can be done looking at {\bf active paths}:
 we say that there is an active $i$-path from $(z, s)$ to $(x, t)$ whenever there are sequences of times and vertices
 $$ s_0 \ = \ s \ < \ s_1 \ < \ \cdots \ < \ s_{n + 1} \ = \ t \qquad \hbox{and} \qquad
    x_0 \ = \ z, \,x_1, \,\ldots, \,x_n \ = \ x $$
 such that the following two conditions hold:
\begin{enumerate}
 \item For $j = 1, 2, \ldots, n$, there is an active $i$-arrow $x_{j - 1} \to x_j$ at time $s_j$. \vspace*{4pt}
 \item For $j = 0, 1, \ldots, n$, there is no active $i$-arrow that points at $\{x_j \} \times (s_j, s_{j + 1})$.
\end{enumerate}
 We say that there is a generalized active path from $(z, s)$ to $(x, t)$ whenever
\begin{enumerate}
 \item[3.] For $j = 1, 2, \ldots, n$, there is an active arrow $x_{j - 1} \to x_j$ at time $s_j$.
\end{enumerate}
 Later, we will use the notations $\overset{i}{\leadsto}$ and $\leadsto$ to indicate the existence of an active $i$-path and
 a generalized active path.
 We also point out that for every space-time point $(x, t)$ there is a unique space-time point $(z, 0)$ such that both points are
 connected by an active $i$-path which, using a simple induction, implies that the corresponding individuals at the corresponding
 times agree on the $i$th issue, i.e., $z$ is the ancestor of $(x, t)$ for the $i$th issue.




\section{Proof of Theorem~\ref{th:flux}}
\label{sec:flux}

\noindent In this section, we prove that, when~$F = \theta + 1$, the opinion model clusters.
 To begin with, we note that, in this parameter region, the vectorial Deffuant model is closely related to the two-state Axelrod model~\cite{axelrod_1997}.
 Indeed, the one-dimensional construction in the previous section can be applied to the latter, which again results in a system of non-independent annihilating
 random walks.
 The only difference between the two models is that, in the system of random walks coupled with the Axelrod model, at each edge
 occupied by~$j$~particles, these particles jump at rate
\begin{equation}
\label{eq:rate-axelrod}
  \begin{array}{rclcl}
   r_{\ax} (j) & = & j^{-1} (1 - j / F) & \hbox{when} & j \neq 0. \end{array}
\end{equation}
 In particular, when $F = \theta + 1$, it follows from \eqref{eq:rate-deffuant} and \eqref{eq:rate-axelrod} that, for both models,
 an edge is a blockade if and only if it has exactly $F$ particles:
 $$ r (j) = 0 \qquad \hbox{if and only if} \qquad r_{\ax} (j) = 0 \qquad \hbox{if and only if} \qquad j = F. $$
 Now, clustering of the two-state Axelrod model has been proved in \cite{lanchier_schweinsberg_2012} and, while it heavily
 relies on the fact that an edge is a blockade if and only if it has $F$ particles, the proof is not sensitive to the exact rate
 at which active particles jump.
 In particular, Theorem~\ref{th:flux} directly follows from the arguments introduced in~\cite{lanchier_schweinsberg_2012} for the
 Axelrod model.
 In addition to the coupling with annihilating random walks, there are two key ingredients:
 each blockade breaks eventually with probability one and, as a consequence, the system of active and frozen particles goes extinct.
 Below, we only give the idea of the proof and refer to Sections 3 and 4 in~\cite{lanchier_schweinsberg_2012} for more details. \vspace*{-5pt} \\


\noindent {\bf Blockade destruction} --
 The first step is to prove destruction of the blockades:
 assuming that a designated edge $e_{\star}$ is a blockade at some time $t$, we have
\begin{equation}
\label{eq:extinction-1}
  T \ := \ \inf \,\{s > t : \zeta_s (e_{\star}) \neq F \} \ < \ \infty \quad \hbox{with probability one}.
\end{equation}
 The proof of~\eqref{eq:extinction-1} relies on two ingredients: parity preserving of the number of particles at each level and a symmetry
 argument introduced by Adelman \cite{adelman_1976} to show site recurrence of systems of annihilating random walks.
 To briefly explain parity preserving, assume that
\begin{itemize}
 \item edge $e^{\star}$ with $e^{\star} > e_{\star}$ also is a blockade at time $t$, i.e., $\zeta_t (e^{\star}) = F$, and \vspace*{4pt}
 \item between the two blockades $e_{\star}$ and $e^{\star}$, the number of particles at some level~$i$ and the number of particles at some other level~$j$ do not have the same parity, i.e.,
 \begin{equation}
 \label{eq:parity}
    \begin{array}{l} \sum_{e_{\star} \leq e \leq e^{\star}} \ \xi_t (e, i) \ \neq \ \sum_{e_{\star} \leq e \leq e^{\star}} \ \xi_t (e, j) \mod 2 \quad \hbox{for some} \quad i \neq j. \end{array}
 \end{equation}
\end{itemize}
 Now, let $\tau$ be the first time one of these two blockades breaks.
 Since particles at the same level annihilate by pairs, the parity of the number of particles between the two blockades is preserved at each level and up to time $\tau$.
 This, together with \eqref{eq:parity}, implies that, up to time $\tau$, there is at least one active particle between the two blockades so either this particle or another active particle
 outside the interval breaks one of the blockades after a finite time:
\begin{equation}
\label{eq:extinction-2}
  \tau \ := \ \inf \,\{s > t : \zeta_s (e_{\star}) \neq F \ \hbox{or} \ \zeta_s (e^{\star}) \neq F \} \ < \ \infty \quad \hbox{with probability one}.
\end{equation}
 The property in \eqref{eq:extinction-1} can be deduced from its analog~\eqref{eq:extinction-2} for two blockades also using some symmetry arguments through the following construction.
 First, we introduce
 $$ \begin{array}{rcl}
     B_0 & := & \{e_{\star}, e_{\star} + 1, \ldots, e_*, e_* + 1, e_* + 2 \} \quad \hbox{where} \vspace*{4pt} \\
     e_* & := & \min \,\{e > e_{\star} : e \ \hbox{and} \ e + 1 \ \hbox{and} \ e + 2 \ \hbox{have not been updated by time} \ t \}. \end{array} $$
 Next, we partition the half-line into intervals with the same length as $B_0$ by setting
 $$ B_n \ := \ B_0 + (e_* + 3 - e_{\star}) \,n \quad \hbox{for all} \quad n \geq 1 $$
 and let $N$ be the smallest $n$ such that
\begin{itemize}
 \item the configurations of particles in $B_0$ and $B_n$ at time $t$ can be obtained from one another by translation or reflection and \vspace*{4pt}
 \item none of the edges in $B_n$ has been updated by time $t$.
\end{itemize}
 Then, letting $e^{\star}$ be the rightmost edge in $B_N$,
\begin{enumerate}
 \item[(a)] the probability that~\eqref{eq:parity} holds, in which case \eqref{eq:extinction-2} holds as well, is $\geq 1/2$, \vspace*{4pt}
 \item[(b)] the probability that the configurations of particles in $B_0$ and $B_N$ at time $t$ can be obtained from one another by reflection is $\geq 1/2$, and \vspace*{4pt}
 \item[(c)] by symmetry, the conditional probability given \eqref{eq:parity} and reflection that the blockade~$e_{\star}$ breaks before the blockade~$e^{\star}$ is equal to~1/2.
\end{enumerate}
 From (a)--(c), we deduce that $P \,(T = \tau) \geq (1/2)^3 = 1/8$.
\begin{itemize}
 \item In case the events in (a) or (b) do not occur, we repeat the same construction starting from the same time~$t$ but replacing the interval~$B_0$ with $B'_0 :=$ the smallest interval that
  contains~$B_N$ and whose rightmost three edges have not been updated by time~$t$. \vspace*{4pt}
 \item In case the events in (a) and (b) occur but not the one in (c), we repeat the same construction but starting from the time the blockade~$e^{\star}$ breaks and replacing~$B_0$ with $B'_0$.
\end{itemize}
 After a geometric number of steps with success probability 1/8, all three events in (a)--(c) occur, from which it follows that time $T$ is almost surely finite. \vspace*{-5pt} \\


\noindent {\bf Extinction of the particles} --
 To complete the proof of the theorem, it suffices to show extinction of the system of random walks, since this property is equivalent to clustering of the original
 opinion model.
 The proof deals with active particles and frozen particles separately.
 First, we assume by contradiction that the expected number of active particles at edge $e$, which does not depend on the choice of $e$ due to translation invariance,
 does not converge:
\begin{equation}
\label{eq:contradict-limit}
  \begin{array}{l} \limsup_{t \to \infty} \ E \,(\zeta_t (e) \,\ind \{\zeta_t (e) \neq F \}) \ \neq \ \liminf_{t \to \infty} \ E \,(\zeta_t (e) \,\ind \{\zeta_t (e) \neq F \}). \end{array}
\end{equation}
 Since the expected number of active particles per edge can only decrease due to annihilating events or active particles becoming frozen, the fact that the expected number of
 active particles per edge goes infinitely often from the $\limsup$ to the $\liminf$ in \eqref{eq:contradict-limit} implies that
\begin{equation}
\label{eq:annihilating-freezing}
  \begin{array}{l} \lim_{t \to \infty} \ E \,(\zeta_t (e)) \ = \ - \infty \quad \hbox{or} \quad \lim_{t \to \infty} \ E \,(\zeta_t (e) \,\ind \{\zeta_t (e) = F \}) = + \infty, \end{array}
\end{equation}
 which is not possible.
 In particular, \eqref{eq:contradict-limit} is not true.
 Now, we assume by contradiction that the expected number of active particles per edge converges to a positive limit:
\begin{equation}
\label{eq:contradict-active}
  \begin{array}{l} \lim_{t \to \infty} \ E \,(\zeta_t (e) \,\ind \{\zeta_t (e) \neq F \}) \ = \ \ep \ > \ 0. \end{array}
\end{equation}
 Since one-dimensional symmetric random walks are recurrent, each active particle either gets annihilated or becomes frozen eventually with probability one
 therefore \eqref{eq:contradict-active} implies that, at all times, the expected number of annihilating events per edge per unit of time or the expected number of freezing
 events per edge per unit of time is larger than some positive constant, which again leads to the impossible statement~\eqref{eq:annihilating-freezing}.
 It follows that~\eqref{eq:contradict-active} is not true.
 To deal with the frozen particles, we first observe that, since the expected number of particles per edge can only decrease, it has a limit as time goes to infinity.
 This, together with the fact that the expected number of active particles per edge has a limit, implies that the expected number of frozen particles per edge has a limit
 as well, and we assume by contradiction that this limit is positive:
\begin{equation}
\label{eq:contradict-frozen}
  \begin{array}{l} \lim_{t \to \infty} \ E \,(\zeta_t (e) \,\ind \{\zeta_t (e) = F \}) \ = \ \ep \ > \ 0. \end{array}
\end{equation}
 Since each blockade breaks eventually with probability one according to~\eqref{eq:extinction-1} and since each blockade destruction results in the annihilation of two
 particles, \eqref{eq:contradict-frozen} implies that the expected number of annihilating events per edge per unit of time is larger than a positive constant,
 thus leading to the left-hand side of~\eqref{eq:annihilating-freezing}, again a contradiction.
 Therefore, the frozen particles go extinct.


\section{Proof of Theorem~\ref{th:uniform-fix}}\label{sec:uniform-fix}

\noindent This section is devoted to the study of the coexistence regime for the system starting from the uniform product
 measure~\eqref{eq:uniform}.
 The first ingredient is the next lemma, which gives a sufficient condition for fixation and is similar to
 Lemma~2 in~\cite{bramson_griffeath_1989} where it is applied to cyclic particle systems.
\begin{lemma} --
\label{lem:fixation}
 For all $(z, i) \in \Z \times \{1, 2, \ldots, F \}$, let
 $$ T (z, i) \ := \ \inf \,\{t : (z, 0) \overset{i}{\leadsto} (0, t) \}. $$
 Then, the system fixates whenever
\begin{equation}
\label{eq:fixation-1}
 \begin{array}{l}
   \lim_{N \to \infty} \,P \,(T (z, i) < \infty \ \hbox{for some} \ z < - N \ \hbox{and some} \ i = 1, 2, \ldots, F) \ = \ 0.
 \end{array}
\end{equation}
\end{lemma}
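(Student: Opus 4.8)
The plan is to reduce fixation to the triviality of the ancestor dynamics at a single vertex and then to close the argument using hypothesis~\eqref{eq:fixation-1} together with the recurrence of one-dimensional symmetric random walk, following the scheme of Lemma~2 in~\cite{bramson_griffeath_1989}. By translation invariance it suffices to show that $\eta_t (0)$ is almost surely eventually constant. Recall from Section~\ref{sec:coupling} that, for each issue $i$, one has $\bar \eta_t (0, i) = \bar \eta_0 (z_i (t), i)$, where $z_i (t)$ is the ancestor of $(0, t)$ for the $i$th issue; so it is enough to prove that each of the $F$ piecewise-constant processes $t \mapsto z_i (t)$ is eventually constant almost surely. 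Here I would first record two elementary facts. If no active $i$-arrow points at $0$ on a time interval $(s, t)$, then the maximal active $i$-path ending at $(0, s)$ extends verbatim to one ending at $(0, t)$, so $z_i (s) = z_i (t)$; thus $z_i$ can only jump at the times when an active $i$-arrow points at $0$. Moreover, at any such time the arrow is active only if its two endpoints disagree at level~$i$, so the imitation event flips $\bar \eta_t (0, i)$ and hence changes $z_i$. Combining these, $z_i$ is eventually constant if and only if only finitely many active $i$-arrows point at $0$, and establishing this finiteness (for every $i$) is all that remains.

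Next I would argue by contradiction: suppose that with positive probability, for some fixed $i$, infinitely many active $i$-arrows point at $0$. An active $i$-arrow into $0$ is exactly the signature, in the graphical representation, of an $i$-particle jumping between the edges $-1/2$ and $1/2$, so on this event $i$-particles cross the vertex $0$ at arbitrarily large times and there is an active $i$-particle on an edge adjacent to $0$ at infinitely many times. While active, such a particle performs a symmetric nearest-neighbour random walk on the edge set, and a particle that is active for an unbounded total amount of time therefore, by recurrence in one dimension, visits edges arbitrarily far from $0$ before annihilating or giving way to another active $i$-particle in the same role. Following the chain of vertices successively updated on issue $i$ along such an excursion, equivalently tracing the corresponding maximal active $i$-path backward from $(0, t)$ for large $t$, one obtains active $i$-paths into $(0, t)$ from vertices arbitrarily far to the left, or, after invoking the reflection symmetry of the dynamics and of the initial law under which the lemma is applied, arbitrarily far to the right; in either case this gives, with positive probability, that $T (z, i) < \infty$ for some $z < -N$ for every $N$. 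Since the events $\{ T (z, i) < \infty \text{ for some } z < -N \}$ decrease in $N$, their intersection has probability $\lim_{N \to \infty} P ( \cdots ) = 0$ by~\eqref{eq:fixation-1}, a contradiction. The hard part will be this recurrence step, i.e. making rigorous the passage from ``the vertex $0$ is updated infinitely often on issue $i$'' to ``the ancestors of $(0, \cdot)$ for issue $i$ are unbounded'', which is precisely where one-dimensionality enters and which parallels the corresponding argument in~\cite{bramson_griffeath_1989}; the remaining steps are routine manipulations with the active-path formalism of Section~\ref{sec:coupling}.

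Finally I would note that, granting the above, almost surely only finitely many active arrows point at $0$, so $\eta_t (0)$ equals a fixed profile for all large $t$, the $i$th coordinate of that profile being $\bar \eta_0 (z_i, i)$ with $z_i := \lim_{t \to \infty} z_i (t)$. By translation invariance the same holds at every vertex, which produces a configuration $\eta_\infty$ such that $\eta_t (x) = \eta_\infty (x)$ eventually in $t$, almost surely, for every $x \in \Z$. Hence the system fixates.
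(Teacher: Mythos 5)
Your reduction is the right one, and it is essentially the scheme of the proof the paper defers to (Lemma~4 of \cite{lanchier_scarlatos_2013}, itself modelled on Lemma~2 of \cite{bramson_griffeath_1989}): fixation at the origin is equivalent to only finitely many active arrows pointing at $0$; each active $i$-arrow into $0$ flips $\bar \eta_t (0, i)$ and therefore moves the ancestor $z_i (t)$ to a vertex carrying the opposite initial opinion at level $i$; and the contradiction with~\eqref{eq:fixation-1} (supplemented by reflection invariance to handle sources on the right, which you correctly point out is needed since the hypothesis only controls $z < -N$) follows once one knows that non-fixation forces the set of ancestors of $(0, \cdot)$ at some level to be unbounded. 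All of these steps are correct, and the final assembly of $\eta_\infty$ is routine.

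The gap is exactly the step you yourself flag as ``the hard part,'' and the tool you offer for it does not close it. Recurrence of the one-dimensional symmetric random walk applies to a single particle that performs infinitely many jumps; but on the event that infinitely many active $i$-arrows point at $0$, no single particle need survive for an infinite amount of time: the particles crossing the origin annihilate in pairs, become frozen, and are replaced by other particles, so the object ``a particle in the same role'' is a concatenation of trajectories of distinct walkers to which no recurrence theorem applies. Nor is the ancestor process $t \mapsto z_i (t)$ itself a nearest-neighbour walk -- at an update time it jumps to the ancestor of the interacting neighbour, which may be arbitrarily far away -- so recurrence cannot be applied to it either. What is actually needed is a combinatorial argument along the following lines: the backward active $i$-lineages of $(0, t)$ are non-crossing nearest-neighbour paths in space-time, two consecutive lineages can never meet (meeting would force equal ancestors, whereas consecutive ancestors carry opposite opinions at level $i$), hence consecutive ancestors are strictly ordered according to the direction of the incoming arrow: the ancestor moves strictly to the right when the active arrow comes from the right and strictly to the left otherwise. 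One then shows that arbitrarily long monotone runs of arrow directions occur almost surely, which expels the ancestor sequence from every bounded interval. Without this (or an equivalent) argument, the passage from ``$0$ is updated infinitely often at level $i$'' to ``$T (z, i) < \infty$ for some $z < -N$, for every $N$'' is asserted rather than proved -- and that passage is the entire content of the lemma, which the paper itself does not reprove but imports from \cite{lanchier_scarlatos_2013}.
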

\begin{proof}
 This follows exactly the proof of Lemma~4 in reference~\cite{lanchier_scarlatos_2013}.
\end{proof} \\ \\
 To motivate the structure of the proof, we first use Lemma~\ref{lem:fixation} to explain in detail the connection between
 the initial configuration of the system, i.e., the initial number of active particles and the initial number of frozen particles,
 and the key event
 $$ H_N \ := \ \{T (z, i) < \infty \ \hbox{for some} \ z < - N \ \hbox{and some} \ i = 1, 2, \ldots, F \} $$
 that appears in~\eqref{eq:fixation-1}.
 Following the same approach as~\cite{lanchier_scarlatos_2013}, we let $\tau$ be the first time an active $i$-path that
 originates from the interval $(- \infty, - N)$ hits the origin, and observe that
 $$ \tau \ = \ \inf \,\{T (z, i) : z \in (- \infty, - N) \ \hbox{and} \ i = 1, 2, \ldots, F \} $$
 from which it follows that $H_N$ can be written as
\begin{equation}
\label{eq:key-event}
 \begin{array}{rcl}
   H_N & = & \{T (z, i) < \infty \ \hbox{for some} \ (z, i) \in (- \infty, - N) \times \{1, 2, \ldots, F \} \} \vspace*{4pt} \\
       & = & \{\tau < \infty \}. \end{array}
\end{equation}
 Letting $z^{\star} < - N$ be the initial position of this active path and
\begin{equation}
\label{eq:paths}
  \begin{array}{rcl}
    z_- & := & \min \,\{z \in \Z : (z, 0) \leadsto (0, \tau) \} \ \leq \ z^{\star} \ < \ - N \vspace*{2pt} \\
    z_+ & := & \max \,\{z \in \Z : (z, 0) \leadsto (0, \sigma) \ \hbox{for some} \ \sigma < \tau \} \ \geq \ 0 \end{array}
\end{equation}
 and defining $I = (z_-, z_+)$, we have the following two properties:
\begin{itemize}
\item All the blockades initially in~$I$ must have been destroyed, i.e., turned into piles of $\theta$~active
      particles due to annihilating events, by time $\tau$. \vspace*{4pt}
\item By definition of $z_-$ and $z_+$, the active particles initially outside $I$ cannot jump inside the space-time
      region delimited by the two generalized active paths defined in~\eqref{eq:paths}.
\end{itemize}
 This together with~\eqref{eq:key-event} implies that, on the event~$H_N$, all the blockades initially in~$I$ must have
 been destroyed before time~$\tau$ by either active particles initially in~$I$ or active particles that result from the destruction
 of these blockades.
 In particular, introducing the following random variables, that we shall call {\bf contributions}, which are measurable with respect to the initial
 configuration and the graphical representation of the process and defined for each edge $e$ as
\begin{equation}
\label{eq:contribution-active}
 \begin{array}{rcl}
     \cont (e) & := & \hbox{number of active particles that either annihilate or become frozen} \\ &&
                      \hbox{as the result of a jump onto $e$ before the first jump of an active} \\ &&
                      \hbox{particle initially at $e$ minus the number of particles initially at $e$} \end{array}
\end{equation}
 when $e$ is initially a live edge, and
\begin{equation}
\label{eq:contribution-frozen}
 \begin{array}{rcl}
     \cont (e) & := & \hbox{number of active particles that either annihilate or become frozen} \\ &&
                      \hbox{as the result of a jump onto $e$ before $e$ becomes a live edge minus} \\ &&
                      \hbox{the number of particles initially at $e$ that ever become active} \end{array}
\end{equation}
 when $e$ is initially a blockade,
 we obtain the following inclusions
\begin{equation}
\label{eq:inclusion}
 \begin{array}{rcl}
   H_N & \subset & \{\sum_{e \in I} \,\cont (e) \leq 0 \} \vspace*{4pt} \\
       & \subset & \{\sum_{e \in (l, r)} \cont (e) \leq 0 \ \hbox{for some $l < - N$ and some $r \geq 0$} \}. \end{array}
\end{equation}
 We now briefly describe the structure of our proof to deduce fixation and coexistence.
 The first step is to find an explicit random function, that we shall call {\bf weight}, defined on the edge set and which is stochastically
 smaller than the contribution random variable.
 Then, proving large deviation estimates for the total weight of a large interval and using Lemma~\ref{lem:fixation} and~\eqref{eq:inclusion},
 we will deduce that fixation occurs whenever the expected value of the weight at a single edge is strictly positive.
 To complete the proof, we will exploit the symmetry of the probability mass function of the binomial random variable to study the sign
 of the expected weight from which fixation will follow for the parameter region described in the statement of the theorem with the
 exception of the three-feature system with threshold one.
 To study this last case, we will improve our stochastic bound for the contribution by also accounting for pairs of active particles
 forming blockades.
\begin{lemma} --
\label{lem:blockade-weight}
 The contribution $\cont (e)$ is stochastically larger than
 $$ \begin{array}{rclclcl}
     \phi (e) & := & - j & \hbox{when} & \zeta_0 (e) = j \leq \theta \vspace*{4pt} \\
              & := &   j + 2 \,(X_j - \theta) & \hbox{when} & \zeta_0 (e) = j > \theta & \hbox{where} & X_j := \bernoulli (1 - j / F). \end{array} $$
\end{lemma}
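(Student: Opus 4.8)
The plan is to treat the two cases $\zeta_0(e) = j \le \theta$ (live edge) and $\zeta_0(e) = j > \theta$ (blockade) separately, in each case producing a coupling that makes $\cont(e)$ dominate the claimed random variable $\phi(e)$. The case of a live edge is essentially a definitional bound: by the definition of $\cont(e)$ in~\eqref{eq:contribution-active}, the quantity ``number of active particles that annihilate or freeze as the result of a jump onto $e$ before the first jump of a particle initially at $e$'' is a nonnegative integer, so $\cont(e) \ge -j = \phi(e)$ deterministically, and stochastic domination is immediate. The real content is the blockade case.

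For a blockade $e$ with $\zeta_0(e)=j>\theta$, the idea is to watch the edge $e$ from time zero until it first becomes a live edge. Each of the $F-j$ initially empty levels at $e$ may receive a particle from a neighboring edge; each such arrival either creates a new particle sitting at level $i$ on $e$, or (if level $i$ at $e$ already carries a particle) causes an annihilation. As long as the pile at $e$ has more than $\theta$ particles, the particles on $e$ are frozen and cannot leave, so the pile size only changes by $\pm 1$ as incoming active particles arrive; the edge becomes live exactly when the size first drops to $\theta$. The key observation is that the only way the pile size can \emph{decrease} is through an annihilation at some level, and before the first decrease the pile has received some number $K$ of incoming particles landing on the $F-j$ initially-empty levels without annihilating, and then one more incoming particle that does annihilate. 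I would show that the number of ``incoming, annihilation-causing'' particles needed to break the blockade — equivalently, the number of active particles that annihilate as the result of a jump onto $e$ before $e$ becomes live — is bounded below by a random variable of the form $j + 2(X_j - \theta)$ with $X_j$ a $\bernoulli(1 - j/F)$ variable. Concretely, one runs the following exploration: the first incoming particle onto $e$ lands on a uniformly random one of the $F$ levels relative to the parity structure; it annihilates precisely when it hits one of the $j$ occupied levels, which by the uniform choice of update direction happens with probability $j/F$, and otherwise (probability $1 - j/F$) it sits on a previously empty level, raising the pile to $j+1$. Encoding ``the first incoming particle does \emph{not} annihilate'' as $X_j = \bernoulli(1-j/F) = 1$, the pile must then be brought back down from $j + X_j$ to $\theta$, which requires at least $(j - \theta) + X_j$ net decreases, each decrease consuming one incoming annihilating particle; meanwhile the ``number of particles at $e$ that ever become active'' is at most the eventual live-edge size, and a careful bookkeeping of which of the $j$ original particles survive versus annihilate yields that $\cont(e) \ge (j-\theta) + X_j + (\text{annihilations among the original } j) - (\text{survivors}) $, which telescopes to $j + 2(X_j - \theta)$. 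The coupling here is delicate because incoming particles after the first may themselves land on empty or occupied levels, so I would set up a dominating process that only ever tracks the \emph{worst case} — always assume later incoming particles annihilate, which can only decrease $\cont(e)$ — and show the resulting lower bound is exactly $\phi(e)$.

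The main obstacle I anticipate is making the above exploration argument into a genuine \emph{stochastic} domination rather than an almost-sure one: after the first incoming particle the geometry of which levels are occupied is history-dependent, and the randomness in $X_j$ has to be extracted from the graphical representation in a way that is consistent across all the later events being bounded crudely. Concretely I would condition on the first incoming jump onto $e$, read off $X_j$ from whether that jump annihilates (using the uniform choice of direction at the source edge, which is independent of everything relevant), and then argue that regardless of the subsequent evolution the inequality $\cont(e) \ge j + 2(X_j - \theta)$ holds pathwise on the event $\{X_j = 1\}$ and $\cont(e) \ge j - 2\theta = j + 2(0-\theta)$ on $\{X_j = 0\}$, since in the latter case the blockade is broken ``immediately'' in the sense that no extra empty level was filled. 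This gives $\cont(e) \ge \phi(e)$ almost surely under the natural coupling, hence the claimed stochastic ordering. The only remaining check is that $X_j$ has the right marginal, which follows because the source particle picks its target level uniformly among the $\zeta$ levels it occupies at the source edge, and on average a fraction $j/F$ of these match an occupied level of $e$ — this is where the symmetry of the uniform product measure~\eqref{eq:uniform} enters, guaranteeing that the level hit at $e$ is uniform on $\{1,\dots,F\}$ conditionally on a particle arriving.
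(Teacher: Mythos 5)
Your proposal is correct and follows essentially the same route as the paper: the definitional bound $\cont(e) \geq -j$ for live edges, the counting of annihilating arrivals versus the at most $\theta$ released particles to get the deterministic bound $j - 2\theta$ for blockades, and the refinement via the first jump onto the blockade, which annihilates with probability $j/F$ by level-permutation symmetry of the initial distribution and dynamics together with independence of the configuration at $e$ from the outside up to that first jump. Your extra bookkeeping of subsequent arrivals (each further pile increase eliminating two more active particles) only strengthens the bound and matches the paper's observation that later events can only help.
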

\begin{proof}
 Let $j$ be the initial number of particles at $e$ and assume first that $j \leq \theta$.
 In this case, the edge is initially a live edge therefore~\eqref{eq:contribution-active} implies that
 $$ \begin{array}{rcl}
     \cont (e) & \geq & \hbox{minus the number of particles initially at $e$} \ = \ - j \end{array} $$
 almost surely, which proves the first part of the lemma.
 Now, assume that $j > \theta$, implying that the edge is initially a blockade.
 Then, observe that $j - \theta$~active particles must annihilate with some of the frozen particles of the blockade to break the blockade, and that
 this results in a total of exactly~$\theta$~frozen particles initially at edge~$e$ becoming active.
 This together with~\eqref{eq:contribution-frozen} gives the following lower bound for the contribution:
 $$ \begin{array}{rcll}
     \cont (e) & \geq & (j - \theta) - \theta \ = \ j - 2 \theta & \hbox{almost surely}. \end{array} $$
 The last step to improve the bound as indicated in the statement of the lemma is to also estimate the number of active particles that become frozen
 as the result of a jump onto $e$ before the blockade becomes a live edge.
 More precisely, we look at the probability that the first jump of an active particle onto the blockade results in an annihilating event, which can
 be computed explicitly using the following symmetry argument:
 since both the initial distribution and the dynamics of the model are invariant by permutation of the levels, and since the configuration of
 particles outside~$e$ is independent of the distribution of particles at~$e$ by the time of the first jump of an active particle onto~$e$, this
 first jump occurs with equal probability at each level.
 In particular, the first jump of an active particle onto the blockade~$e$ results in either
\begin{itemize}
\item an annihilating event with probability $j/F$, in which case the number of active particles required to break the blockade is the same as before or \vspace*{4pt}
\item a blockade increase with probability $1 - j/F$, in which case one active particle becomes frozen and one additional active particle is required
  to eventually break the blockade.
\end{itemize}
 Since two additional active particles are eliminated in the event of a blockade increase, we deduce that the contribution
 of $e$ is stochastically larger than
 $$ (1 - X_j)(j - 2 \theta) + X_j \,(j - 2 \theta + 2) \ = \ j + 2 \,(X_j - \theta) $$
 where $X_j = \bernoulli (1 - j / F)$.
 This completes the proof.
\end{proof} \\ \\
 In the next lemma, we prove large deviation estimates for the weight in a large interval, from which we deduce, in the subsequent lemma, that
 the system fixates whenever the expected value of the weight function at a given edge is strictly positive.
\begin{lemma} --
\label{lem:large-deviations}
 There exist $C_1 < \infty$ and $c_1 > 0$ such that, for all $\ep > 0$,
 $$ \begin{array}{l} P \,(\sum_{e \in (- N, 0)} \phi (e) \leq N (E \phi (e) - \ep)) \ \leq \ C_1 \exp (- c_1 N \ep^2). \end{array} $$
\end{lemma}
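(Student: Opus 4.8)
The plan is to recognize that the weights $\phi(e)$ defined in Lemma~\ref{lem:blockade-weight} form an i.i.d.\ sequence under the uniform product measure~\eqref{eq:uniform} together with the independent randomness (the Bernoulli variables $X_j$) used in the graphical representation, so that the desired bound is just the standard one-sided Cramér/Hoeffding large deviation estimate for sums of i.i.d.\ bounded random variables.

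First I would check independence: under~\eqref{eq:uniform} the spin values $\bar\eta_0(x,i)$ are i.i.d.\ fair coins, hence the disagreement indicators $\xi_0(e,i)$ across distinct edges $e$ are independent (each edge involves a disjoint pair of vertices only in the sense that consecutive edges share a vertex — so I should be slightly careful here). In fact $\xi_0(e,i)=\ind\{\bar\eta_0(e-1/2,i)\neq\bar\eta_0(e+1/2,i)\}$, and for fixed $i$ these are independent across $e$ because they are XORs of independent fair bits along a path (a standard fact: $\xi_0(e,i)$ are i.i.d.\ Bernoulli$(1/2)$). Across levels $i$ they are independent by the product structure over coordinates. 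Therefore $\zeta_0(e)=\sum_i\xi_0(e,i)$ is, for each $e$, Binomial$(F,1/2)$, and the vector $(\zeta_0(e))_e$ is i.i.d.\ across $e$. Attaching the independent Bernoulli$(1-j/F)$ variable $X_j$ from the graphical representation (independent across edges), we get that $\phi(e)$, as defined by cases on $\zeta_0(e)$, is an i.i.d.\ sequence of random variables taking values in the bounded range $\{-\theta,\ldots,-1,\,\theta{+}1{-}2\theta,\ldots,F\}\subset[-F,F]$.

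Next I would apply the one-sided Hoeffding inequality (or Cramér's theorem): for i.i.d.\ random variables bounded in an interval of length $L\le 2F$,
$$ P\Big(\textstyle\sum_{e\in(-N,0)}\phi(e)\le N(E\phi(e)-\ep)\Big)\ \le\ \exp\!\big(-2N\ep^2/L^2\big)\ \le\ \exp(-N\ep^2/(2F^2)). $$
Taking $c_1:=1/(2F^2)$ and $C_1:=1$ gives the claim; the constant $C_1<\infty$ provides slack in case one prefers a cruder Bernstein-type bound. One small bookkeeping point: the interval $(-N,0)$ contains $N-1$ edges (the half-integers $-N+1/2,\ldots,-1/2$), not $N$, but replacing $N$ by $N-1$ in the exponent and absorbing the difference into $C_1$ (e.g.\ $C_1=e^{c_1}$) repairs this harmlessly; I would either note this or simply state the bound up to such constants.

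The only genuine subtlety — and the step I would treat most carefully — is the independence of the $\phi(e)$'s. The $X_j$ attached to different blockades must be drawn from disjoint parts of the graphical representation (the uniform variables $U_{x,i}$ governing the first active jump onto each edge), so I should argue that the ``first jump onto $e$ occurs at a uniformly random level'' events for different edges $e$ use disjoint families of Poisson/Bernoulli/uniform marks and are therefore independent; this is exactly the permutation-invariance argument already invoked in the proof of Lemma~\ref{lem:blockade-weight}, applied edge by edge. Granting that, the lemma is immediate from classical large deviations, so I do not expect any real obstacle beyond stating the i.i.d.\ structure cleanly.
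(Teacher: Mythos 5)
Your proposal is correct, but it reaches the estimate by a more direct route than the paper. The paper never invokes Hoeffding's inequality for general bounded i.i.d.\ variables: it reduces everything to the single binomial Chernoff bound by decomposing $\sum_{e}\phi(e)$ into the edge counts $e_N(j):=\card\{e\in(-N,0):\zeta_0(e)=j\}$ (each shown to be $\binomial(N,p_j)$ via the same change\-over/XOR observation you make) plus, for each $j>\theta$, the sum $\sum_{e\in\Omega_j}2X_{e,j}$, which it controls by conditioning on $e_N(j)$ being close to $Np_j$ so that the conditional sum is again binomial with an almost deterministic number of trials; a union bound over the $O(F)$ pieces then gives the lemma. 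Your one-shot Hoeffding bound on the i.i.d.\ bounded sequence $\phi(e)$ is cleaner, gives better constants, and sidesteps the random-number-of-summands conditioning entirely; what the paper's decomposition buys is that it only ever needs the elementary binomial tail bound and makes explicit exactly which independence statements are being used at each step. Both arguments rest on the same two facts, and you correctly isolate the only delicate one: the joint independence of the variables $X_{e,j}$ attached to distinct blockades (and their independence from the initial particle counts at other edges). Be aware, though, that your suggested mechanism for this --- ``disjoint families of $U_{x,i}$ marks'' --- is not quite right, since the level of the first particle to jump onto a blockade is determined by the initial configuration and the marks along that particle's whole trajectory, not by randomness localized at the edge; the paper itself only asserts this independence as ``a simple extension of the symmetry argument'' of its Lemma~\ref{lem:blockade-weight}, so your treatment is at the same level of rigor, but you should lean on that permutation/symmetry argument rather than on disjointness of marks. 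The off-by-one count of edges in $(-N,0)$ is handled exactly as you say, by absorbing it into $C_1$.
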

\begin{proof}
 The idea is to prove that the number of $j$-edges in a given interval is a binomial random variable and then apply the standard large deviation estimates: for all $\ep > 0$,
\begin{equation}
\label{eq:uniform-fix-1}
 \begin{array}{rcl}
   P \,(Z \leq N (p - \ep)) & \leq & \exp (- (1/2) \,N p^{-1} \ep^2) \ \leq \ \exp (- (1/2) \,N \ep^2) \vspace*{4pt} \\
   P \,(Z \geq N (p + \ep)) & \leq & \exp (- (1/2) \,N (1 - p)^{-1} \ep^2) \ \leq \ \exp (- (1/2) \,N \ep^2)
 \end{array}
\end{equation}
 where $Z = \binomial (N, p)$.
 First, we note that, starting from the uniform product measure, the opinions at two adjacent vertices at a given level are initially either equal or different
 with probability one half, independently of the rest of the initial configuration.
 This implies that the initial number of particles at any given edge is a binomial random variable:
\begin{equation}
\label{eq:uniform-fix-2}
  p_j \ := \ P \,(\zeta_0 (e) = j) \ = \ {F \choose j} (1/2)^F \quad \hbox{for each edge} \ e.
\end{equation}
 Recalling the expression of the bound $\phi (e)$, we thus obtain
\begin{equation}
\label{eq:uniform-fix-3}
 \begin{array}{rcl}
   E \phi (e) & = & \sum_{j \leq \theta} \,(- j) \,p_j + \sum_{j > \theta} \,(j + 2 \,(1 - j / F - \theta)) \,p_j \vspace*{4pt} \\
              & = & \sum_{j \leq \theta} \,(- j) \,p_j + \sum_{j > \theta} \,(j - 2 \theta) \,p_j + \sum_{j > \theta} \,2 \,(1 - j / F) \,p_j. \end{array}
\end{equation}
 To also have an explicit expression for the weight of $(- N, 0)$, we let
 $$ \Omega_j \ := \ \{e \in (- N, 0) : \zeta_0 (e) = j \} \quad \hbox{and} \quad e_N (j) \ := \ \card \,\Omega_j \quad \hbox{for} \quad j = 0, 1, \ldots, F, $$
 denote the set of and the number of $j$-edges in $(- N, 0)$, respectively.
 Using again that initially the different pairs edge-level are independently empty or occupied by a particle with equal probability, a simple extension of
 the symmetry argument from Lemma~\ref{lem:blockade-weight} implies that the Bernoulli random variables that determine the outcome of the first jump onto
 the blockades are independent.
 It follows that the random weight of the interval $(- N, 0)$ can be expressed as
\begin{equation}
\label{eq:uniform-fix-4}
 \begin{array}{rcl}
  \sum_{e \in (- N, 0)} \phi (e) & = & \sum_{j \leq \theta} \,(-j) \,e_N (j) + \sum_{j > \theta} \,(j + 2 \,(X_{e, j} - \theta)) \,e_N (j) \vspace*{4pt} \\
                            & = & \sum_{j \leq \theta} \,(-j) \,e_N (j) + \sum_{j > \theta} \,(j - 2 \theta) \,e_N (j) + \sum_{j > \theta} \,\sum_{e \in \Omega_j} 2 X_{e, j} \end{array}
\end{equation}
 where the random variables $X_{e, j}$ are independent Bernoulli random variables with the same success probability $1 - j / F$.
 Combining the expressions~\eqref{eq:uniform-fix-3} and~\eqref{eq:uniform-fix-4}, we deduce that
\begin{equation}
\label{eq:uniform-fix-5}
 \begin{array}{l}
   P \,(\sum_{e \in (- N, 0)} \phi (e) \leq N (E \phi (e) - \ep)) \vspace*{4pt} \\ \hspace*{20pt} \leq \
        \sum_{j \leq \theta} \,P \,((-j)(e_N (j) - N p_j) \leq - N \ep / 2F) \vspace*{4pt} \\ \hspace*{20pt} + \
        \sum_{j > \theta} \,P \,((j - 2 \theta)(e_N (j) - N p_j) \leq - N \ep / 2F) \vspace*{4pt} \\ \hspace*{20pt} + \
        \sum_{j > \theta} \,P \,(\sum_{j \in \Omega_j} 2 X_{e, j} - 2 \,(1 - j / F) N p_j \geq N \ep / 2F). \end{array}
\end{equation}
 To bound the first two terms, we first use~\eqref{eq:uniform-fix-2} and independence to deduce
\begin{equation}
\label{eq:uniform-fix-6}
  e_N (j) \ = \ \binomial (N, p_j) \quad \hbox{for all} \quad j = 0, 1, \ldots, F.
\end{equation}
 Then, using~\eqref{eq:uniform-fix-1} and~\eqref{eq:uniform-fix-6}, we get
\begin{equation}
\label{eq:uniform-fix-7}
 \begin{array}{l}
   P \,((-j)(e_N (j) - N p_j) \leq - N \ep / 2F) \vspace*{4pt} \\ \hspace*{40pt} \leq \
   P \,(e_N (j) - N p_j \geq N \ep / 2F^2) \ \leq \ \exp (- N \ep^2 / 8 F^4) \end{array}
\end{equation}
 for $j = 1, 2, \ldots, \theta$, and
\begin{equation}
\label{eq:uniform-fix-8}
 \begin{array}{l}
   P \,((j - 2 \theta)(e_N (j) - N p_j) \leq - N \ep / 2F) \vspace*{4pt} \\ \hspace*{40pt} \leq \
   P \,(e_N (j) - N p_j \notin (- N \ep / 2F^2, N \ep / 2F^2)) \ \leq \ 2 \,\exp (- N \ep^2 / 8 F^4) \end{array}
\end{equation}
 for $j = \theta + 1, \ldots, F$.
 Finally, using again the second inequality in~\eqref{eq:uniform-fix-1} together with the fact that the random variables~$X_{e, j}$ are independent, we get
 $$ \begin{array}{l}
     P \,(\sum_{e \in \Omega_j} 2 X_e - 2 \,(1 - j / F) N p_j \geq N \ep / 2F \,| \,e_N (j) < N \,(p_j + \ep / 16F)) \vspace*{4pt} \\ \hspace*{20pt} \leq \
     P \,(\sum_{e \in \Omega_j} X_e - (1 - j / F) N p_j \geq N \ep / 4F \,| \,\card \Omega_j = N \,(p_j + \ep / 16F)) \vspace*{4pt} \\ \hspace*{20pt} \leq \
     P \,(\binomial (N (p_j + \ep / 16F), 1 - j/F) \geq N (\ep / 4F + (1 - j / F) \,p_j)) \vspace*{4pt} \\ \hspace*{20pt} \leq \
     P \,(\binomial (N (p_j + \ep / 16F), 1 - j/F) \geq N (p_j + \ep / 16F)(1 - j/F + \ep / 16F)) \vspace*{4pt} \\ \hspace*{20pt} \leq \
       \exp (- (1/2) \,N \,(p_j + \ep / 16F)(\ep / 16F)^2) \end{array} $$
 from which we deduce that
\begin{equation}
\label{eq:uniform-fix-9}
 \begin{array}{l}
  P \,(\sum_{e \in \Omega_j} 2 X_e - 2 \,(1 - j / F) N p_j \geq N \ep / 2F) \vspace*{4pt} \\ \hspace*{20pt} \leq \
  P \,(\sum_{e \in \Omega_j} 2 X_e - 2 \,(1 - j / F) N p_j \geq N \ep / 2F \,| \,e_N (j) < N \,(p_j + \ep / 16F)) \vspace*{4pt} \\ \hspace*{40pt} + \
  P \,(e_N (j) \geq N \,(p_j + \ep / 16F)) \vspace*{4pt} \\ \hspace*{20pt} \leq \
    \exp (- (1/2) \,N \,(p_j + \ep / 16F)(\ep / 16F)^2) \ + \ \exp (- (1/2) \,N \,(\ep / 16F)^2).
 \end{array}
\end{equation}
 The lemma then follows from~\eqref{eq:uniform-fix-5} and~\eqref{eq:uniform-fix-7}--\eqref{eq:uniform-fix-9}.
\end{proof}
\begin{lemma} --
\label{lem:uniform-fix}
 The system fixates whenever $E \phi (e) > 0$.
\end{lemma}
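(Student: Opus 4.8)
The plan is to derive fixation from Lemma~\ref{lem:fixation} by showing that $P(H_N) \to 0$ as $N \to \infty$, where $H_N = \{T(z,i) < \infty$ for some $z < -N$ and some $i = 1, \ldots, F\}$. The three lemmas of this section assemble into such an estimate almost mechanically. The chain of inclusions in~\eqref{eq:inclusion} already bounds $H_N$ by the event that $\sum_{e \in (l,r)} \cont(e) \leq 0$ for some $l < -N$ and some $r \geq 0$, and Lemma~\ref{lem:blockade-weight} lets us replace the contributions by the explicit weights $\phi(e)$. Here the point I would emphasize is that the inequality $\cont(e) \geq \phi(e)$ is realized \emph{pathwise} --- this is clear from the proof of Lemma~\ref{lem:blockade-weight}, the Bernoulli part of $\phi(e)$ being the indicator that the first active jump onto a blockade $e$ is a pile increase --- so it can be summed simultaneously over all edges of any interval, giving
$$ H_N \ \subseteq \ \Big\{ \sum_{e \in (l,r)} \phi(e) \leq 0 \ \hbox{for some} \ l < -N \ \hbox{and some} \ r \geq 0 \Big\}. $$

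I would then apply Lemma~\ref{lem:large-deviations} via a union bound over $l$ and $r$. Since the initial law~\eqref{eq:uniform} is a product measure and the graphical representation is translation invariant, the distribution of $\sum_{e \in (l,r)} \phi(e)$ depends only on $n := r - l$, the number of edges of $(l,r)$, so Lemma~\ref{lem:large-deviations} applies verbatim to each such interval with $N$ replaced by $n$. Taking $\ep := E\phi(e) > 0$ there gives
$$ P \Big( \sum_{e \in (l,r)} \phi(e) \leq 0 \Big) \ = \ P \Big( \sum_{e \in (l,r)} \phi(e) \leq n (E\phi(e) - \ep) \Big) \ \leq \ C_1 \exp (- c_1 n (E\phi(e))^2). $$
For $l < -N$ and $r \geq 0$ one has $n \geq N+1$, and for each fixed $n$ there are at most $n$ admissible pairs $(l,r)$, so
$$ P(H_N) \ \leq \ \sum_{n \geq N+1} n \, C_1 \exp (- c_1 n (E\phi(e))^2) \ \longrightarrow \ 0 \quad \hbox{as} \quad N \to \infty, $$
in fact exponentially fast in $N$; Lemma~\ref{lem:fixation} then yields fixation.

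The only steps needing care, and the place I expect the main (minor) obstacle, are bookkeeping ones: first, one must use that $\cont(e) \geq \phi(e)$ holds on a single probability space for all edges at once, for otherwise the passage from $\sum_e \cont(e)$ to $\sum_e \phi(e)$ inside the probability is not justified; second, one must keep \emph{both} constraints $l < -N$ and $r \geq 0$, since dropping $r \geq 0$ leaves a divergent sum over $l$. With these observed, the argument is a routine large-deviation union bound.
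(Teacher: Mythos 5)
Your proposal is correct and follows essentially the same route as the paper: the inclusion~\eqref{eq:inclusion}, the pathwise replacement of $\cont(e)$ by $\phi(e)$ via Lemma~\ref{lem:blockade-weight}, a union bound over the intervals $(l,r)$ with $l<-N$ and $r\geq 0$, and Lemma~\ref{lem:large-deviations} applied with $\ep := E\phi(e)$ to each interval, yielding a convergent geometric-type sum and hence $P(H_N)\to 0$, so that Lemma~\ref{lem:fixation} gives fixation. Your explicit remarks on the pathwise nature of the domination and on keeping both constraints $l<-N$ and $r\geq 0$ are points the paper leaves implicit, but they do not change the argument.
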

\begin{proof}
 Let $\ep := E \phi (e) > 0$.
 Then, according to Lemma~\ref{lem:large-deviations},
 $$ \begin{array}{rcl}
     P \,(\sum_{e \in (- N, 0)} \phi (e) \leq 0) & = & P \,(\sum_{e \in (- N, 0)} \phi (e) \leq N (E \phi (e) - \ep)) \vspace*{4pt} \\
                                                 & \leq & C_1 \exp (- c_1 N \ep^2). \end{array} $$
 This, together with~\eqref{eq:inclusion} and Lemma~\ref{lem:blockade-weight}, implies that
 $$ \begin{array}{l}
      \lim_{N \to \infty} P \,(H_N) \
      \leq \ \lim_{N \to \infty} P \,(\sum_{e \in I} \,\cont (e) \leq 0) \vspace*{4pt} \\ \hspace*{20pt}
      \leq \ \lim_{N \to \infty} P \,(\sum_{e \in (l, r)} \cont (e) \leq 0 \ \hbox{for some $l < - N$ and some $r \geq 0$}) \vspace*{4pt} \\ \hspace*{20pt}
      \leq \ \lim_{N \to \infty} P \,(\sum_{e \in (l, r)} \phi (e) \leq 0 \ \hbox{for some $l < - N$ and some $r \geq 0$}) \vspace*{4pt} \\ \hspace*{20pt}
      \leq \ \lim_{N \to \infty} \sum_{l < - N} \,\sum_{r > 0} \,P \,(\sum_{e \in (l, r)} \phi (e) \leq 0) \vspace*{4pt} \\ \hspace*{20pt}
      \leq \ \lim_{N \to \infty} \sum_{l < - N} \,\sum_{r > 0} \,C_1 \exp (- c_1 \,(r - l)\ep^2) \ = \ 0. \end{array} $$
 In particular, Lemma \ref{lem:fixation} implies that the system fixates.
\end{proof} \\ \\
 Having Lemma~\ref{lem:uniform-fix} in hands, the last step is to exhibit the set of parameters for which the expected value of the weight
 function at a single edge is positive.
 This can be done by making the expression of the expected value more explicit but this leads to messy calculations.
 As previously mentioned, we use instead that the probability mass function of the binomial random variable is symmetric when the success
 probability is equal to one half.
 Our approach is illustrated in Figure~\ref{fig:weight} where the left-hand side represents the expected value of the weight as a function
 of the initial number of particles at the edge and where the right-hand side is obtained by folding this picture along the vertical dashed
 line and by adding the expected values.
 The figure suggests that, through this simple transformation, the expected value of the weight can be expressed as a sum of positive values
 when $F \geq 4 \theta$, which is done rigorously in the next lemma.
\begin{lemma} --
\label{lem:inner}
 Assume~\eqref{eq:uniform} and $F \geq 4 \theta$. Then, $E \phi (e) > 0$.
\end{lemma}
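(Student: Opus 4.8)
The plan is to evaluate $E \phi (e)$ explicitly from Lemma~\ref{lem:blockade-weight} together with the binomial law~\eqref{eq:uniform-fix-2} of the initial number of particles at an edge, and then to fold the resulting sum about $F/2$ using the symmetry $p_j = p_{F - j}$, so as to display it as a positively weighted sum of strictly positive terms --- the transformation alluded to just before the statement. Writing $\psi (j)$ for the conditional expectation of $\phi (e)$ given $\zeta_0 (e) = j$, Lemma~\ref{lem:blockade-weight} gives $\psi (j) = - j$ for $j \leq \theta$ and $\psi (j) = j - 2 \theta + 2 \,(1 - j / F)$ for $j > \theta$, so that $E \phi (e) = \sum_{j = 0}^F \psi (j) \,p_j$ with $p_j = {F \choose j} (1/2)^F$. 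Since $\psi (j) < 0$ for small $j$, a term-by-term estimate is hopeless; instead, the reindexing $j \mapsto F - j$ and the identity $p_j = p_{F - j}$ yield
$$ E \phi (e) \ = \ \frac{1}{2} \,\sum_{j = 0}^F \,g (j) \,p_j \qquad \hbox{where} \qquad g (j) \ := \ \psi (j) + \psi (F - j), $$
and $g$ is symmetric about $F / 2$, so it suffices to prove $g (j) > 0$ for all integers $0 \leq j \leq F / 2$.

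The assumption $F \geq 4 \theta$ is exactly what makes this go through: for every $j \leq F / 2$ we have $F - j \geq F / 2 \geq 2 \theta > \theta$, so $\psi (F - j)$ is always given by the blockade formula, while $\psi (j)$ is given by the blockade formula for $\theta < j \leq F/2$ and by $- j$ for $j \leq \theta$. A short computation then gives
$$ g (j) \ = \ F - 2 \theta - \frac{2 j (F - 1)}{F} \quad \hbox{for} \ \ 0 \leq j \leq \theta, \qquad g (j) \ = \ F - 4 \theta + 2 \quad \hbox{for} \ \ \theta < j \leq F / 2, $$
where in the second range the terms $- 2 j / F$ and $- 2 (F - j) / F$ cancel against $j + (F - j) = F$. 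The second expression is positive since $F \geq 4 \theta$. In the first range, $2 j (F - 1) / F < 2 j \leq 2 \theta$, hence $g (j) > F - 4 \theta \geq 0$, and one checks that the inequality remains strict at the endpoint $j = \theta$ even when $F = 4 \theta$ (there $g (\theta) = 1/2$). As $g (j) > 0$ and $p_j > 0$ for every $j$, it follows that $E \phi (e) = \tfrac12 \sum_j g (j) \,p_j > 0$, which is the claim.

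I do not expect any genuine difficulty beyond hitting on the right symmetrization --- folding the sum about $F / 2$ via $p_j = p_{F - j}$ rather than trying to control $\psi$ directly --- after which the argument is elementary arithmetic; the only points requiring a little care are keeping track of which branch of $\psi$ applies and checking the boundary cases $j = \theta$ and $F = 4 \theta$ so that the final inequality is strict. The constant $4 \theta$ is visibly the best this method produces, the binding constraint coming from the $j \leq \theta$ branch near $j = \theta$.
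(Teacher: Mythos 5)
Your proof is correct and follows essentially the same route as the paper: both exploit the symmetry $p_j = p_{F-j}$ of the $\binomial(F,1/2)$ mass function to fold the sum about $F/2$, obtaining the constant $F - 4\theta + 2$ on the middle range and the bound $F - 2\theta - 2j(F-1)/F > F - 2(\theta + j) \geq F - 4\theta \geq 0$ on $0 \leq j \leq \theta$. Your formulation as $\tfrac12\sum_j g(j)p_j$ with every $g(j)$ strictly positive is a slightly cleaner packaging of the paper's argument (which splits into four intervals $J_1,\dots,J_4$, discards the midpoint term when $F$ is even, and concludes from ``all terms nonnegative with some positive''), but it is not a genuinely different proof.
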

\begin{proof}
 To begin with, we introduce
 $$ \begin{array}{rcl}
     K_- & := & \hbox{the largest integer smaller than or equal to} \ (1/2)(F - 1) \vspace*{2pt} \\
     K_+ & := & \hbox{the smallest integer larger than or equal to} \ (1/2)(F + 1) \end{array} $$
 and observe that $K_- + K_+ = F$ and
\begin{equation}
\label{eq:inner-1}
  \begin{array}{rcll}
    K_+ - K_- & = & 1 & \hbox{when $F$ is odd} \vspace*{2pt} \\
              & = & 2 & \hbox{when $F$ is even}. \end{array}
\end{equation}
 Letting $q_j (\theta, F) := 2 \,(1 - \theta) + (1 - 2 / F) \,j$, we also have
 $$ \begin{array}{rcl}
                        q_{F - j} (\theta, F) & = & 2 \,(1 - \theta) + (1 - 2 / F)(F - j) \ = \ F - 2 \theta - (1 - 2 / F) \,j \vspace*{4pt} \\
      q_j (\theta, F) + q_{F - j} (\theta, F) & = & 2 \,(1 - \theta) + F - 2 \theta \ = \ F - 4 \theta + 2 \vspace*{4pt} \\
                        q_{F / 2} (\theta, F) & = & F/2 - 2 \theta + 1 \ \geq \ 1 \quad \hbox{for $F \geq 4 \theta$ and even}. \end{array} $$
 In particular, considering the intervals
\begin{equation}
\label{eq:inner-2}
  J_1 := [0, \theta], \quad J_2 := [\theta + 1, K_-], \quad J_3 := [K_+, F - (\theta + 1)], \quad J_4 := [F - \theta, F],
\end{equation}
 recalling~\eqref{eq:uniform-fix-3} and using the symmetry $p_j = p_{F - j}$, we obtain
\begin{figure}[t]
\centering
\scalebox{0.36}{\input{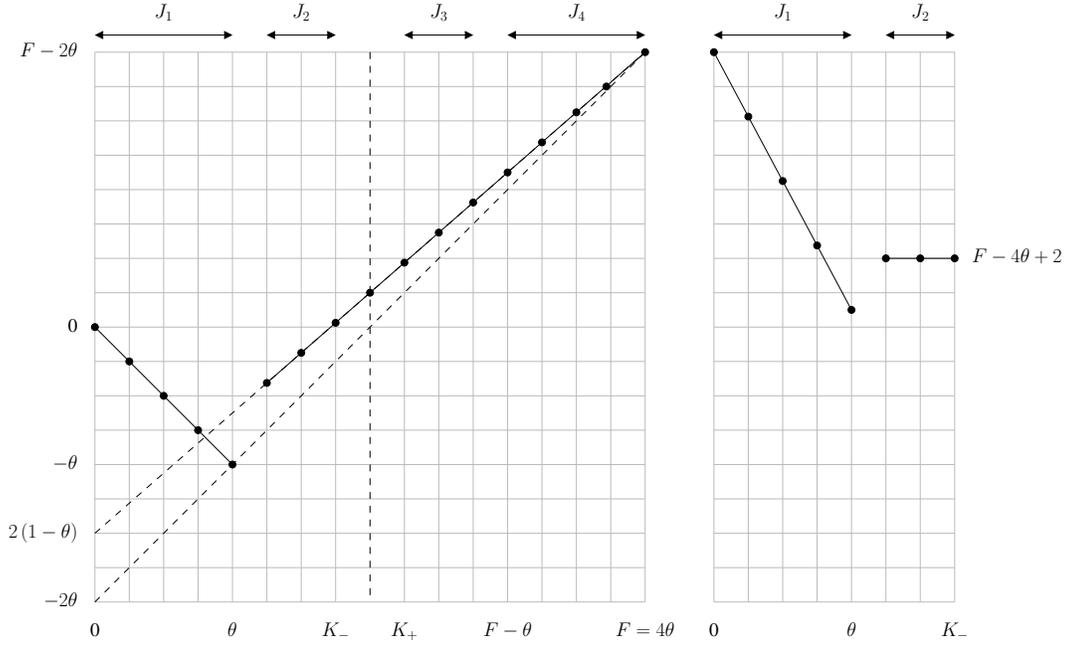}}
\caption{\upshape{Picture related to the proof of Lemma~\ref{lem:inner}.
 The dots on the left-hand side represent the value of the weight function.
 The right-hand side is obtained by using the symmetry of the probability mass function of the binomial random variable when the success
 probability is equal to one half and illustrates the next-to-last line in~\eqref{eq:inner-3}.}}
\label{fig:weight}
\end{figure}
\begin{equation}
\label{eq:inner-3}
 \begin{array}{rcl}
   E \phi (e) & = & \sum_{j \leq \theta} \,(- j) \,p_j + \sum_{j > \theta} \,(j + 2 \,(1 - j / F - \theta)) \,p_j \vspace*{4pt} \\
              & \geq & \sum_{j \in J_1} \,(- j) \,p_j + \sum_{k = 2, 3, 4} \,\sum_{j \in J_k} \,q_j (\theta, F) \,p_j \vspace*{4pt} \\
                 & = & \sum_{j \in J_1} \,((- j) + q_{F - j} (\theta, F)) \,p_j + \sum_{j \in J_2} \,(q_j (\theta, F) + q_{F - j} (\theta, F)) \,p_j \vspace*{4pt} \\
                 & = & \sum_{j \in J_1} \,(F - 2 \theta - 2 \,(1 - 1 / F) \,j) \,p_j + \sum_{j \in J_2} \,(F - 4 \theta + 2) \,p_j \vspace*{4pt} \\
              & \geq & \sum_{j \in J_1} \,(F - 2 \,(\theta + j)) \,p_j + \sum_{j \in J_2} \,(F - 4 \theta + 2) \,p_j \ > \ 0 \end{array}
\end{equation}
 for all $F \geq 4 \theta$ since in this case all the terms in the previous two sums are nonnegative with also some positive
 terms (see Figure~\ref{fig:weight} for a picture).
\end{proof} \\ \\
 The approach of the previous proof does not extend to the case $F = 4 \theta - 1$ because, for this set of parameters, the first sum in the last
 line of~\eqref{eq:inner-3} contains a negative term.
 To deal with this case avoiding messy calculations, we find a lower bound using the binomial random variable and then use standard large deviation
 estimates for this distribution.
\begin{lemma} --
\label{lem:boundary}
 Assume~\eqref{eq:uniform} and $F = 4 \theta - 1$ with $\theta \geq 2$. Then, $E \phi (e) > 0$.
\end{lemma}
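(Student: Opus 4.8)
The plan is to re-run the folding argument that underlies Lemma~\ref{lem:inner} and then deal by hand with the single coefficient in the resulting bound that turns negative when $F = 4\theta - 1$. The key observation is that none of the manipulations in~\eqref{eq:inner-3} leading to its next-to-last line --- the symmetry $p_j = p_{F-j}$, the identities for $q_j(\theta, F) + q_{F-j}(\theta, F)$ and for $-j + q_{F-j}(\theta, F)$, and the bound $2(1 - 1/F)\,j \leq 2j$ --- actually uses the hypothesis $F \geq 4\theta$; they hold for every $F$. Hence, with $J_1 = [0, \theta]$ and $J_2 = [\theta + 1, K_-]$ as in~\eqref{eq:inner-2}, we always have
$$ E\phi(e) \ \geq \ \sum_{j \in J_1} (F - 2(\theta + j))\, p_j \ + \ \sum_{j \in J_2} (F - 4\theta + 2)\, p_j. $$

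Specializing to $F = 4\theta - 1$ gives $K_- = 2\theta - 1$, $F - 2(\theta + j) = 2\theta - 1 - 2j$ and $F - 4\theta + 2 = 1$, so that
$$ E\phi(e) \ \geq \ \sum_{j = 0}^{\theta} (2\theta - 1 - 2j)\, p_j \ + \ \sum_{j = \theta + 1}^{2\theta - 1} p_j. $$
In the first sum every coefficient with $j \leq \theta - 1$ is at least $1$, and the only negative one is the coefficient $-1$ at $j = \theta$; dropping the nonnegative terms reduces the claim to showing that $-p_\theta + \sum_{j = \theta+1}^{2\theta-1} p_j > 0$. This is precisely where the hypothesis $\theta \geq 2$ enters: it is equivalent to $\theta + 1 \leq 2\theta - 1$, so that the second sum is nonempty and, in particular, contains the term $p_{\theta + 1}$. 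It therefore suffices to prove $p_{\theta + 1} > p_\theta$.

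Since $p_j = \binom{F}{j}(1/2)^F$, we have $p_{\theta + 1}/p_\theta = (F - \theta)/(\theta + 1) = (3\theta - 1)/(\theta + 1)$ when $F = 4\theta - 1$, and this ratio exceeds $1$ exactly when $\theta > 1$; conceptually, $\theta + 1 \leq (F-1)/2$ for $\theta \geq 2$, so $p_{\theta+1}$ lies on the strictly increasing part of the $\binomial(F, 1/2)$ mass function and hence strictly dominates $p_\theta$. This yields $E\phi(e) > 0$, which by Lemma~\ref{lem:uniform-fix} is exactly what is needed.

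I expect the only delicate part to be the bookkeeping: one must check that $j = \theta$ is the only index in $J_1$ contributing a negative coefficient, and that $\theta \geq 2$ is exactly what places $\theta + 1$ inside $J_2$, and one should note that for $\theta = 1$ (that is, $F = 3$) the bound collapses to $E\phi(e) = 0$, which is why that case is excluded here and treated separately afterwards. Beyond that there is nothing heavy: in particular, no genuine large-deviation estimate is required for the boundary line $F = 4\theta - 1$ --- the symmetry and unimodality of the binomial mass function already used in Lemma~\ref{lem:inner} are enough.
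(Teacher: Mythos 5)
Your proof is correct, and it takes a genuinely different (and leaner) route than the paper's. The paper bounds \emph{every} coefficient $F - 2(\theta + j)$, $j \in J_1$, from below by its worst value $-1$, which reduces the claim to $P(Z \in J_2) > P(Z \in J_1)$ for $Z = \binomial(4\theta - 1, 1/2)$, and then to $P(Z \leq \theta) < 1/4$ via the symmetry/partition identity; establishing that requires a large deviation estimate for the binomial (valid only for $\theta \geq 7$) plus explicit computations for $2 \leq \theta \leq 6$. You instead observe that $j = \theta$ is the \emph{only} index in $J_1$ with a negative coefficient, that this coefficient is exactly $-1$, and that all other coefficients are $\geq 1$, so after discarding nonnegative terms the claim reduces to $p_{\theta + 1} > p_\theta$, which follows from the ratio $\binom{F}{\theta+1}/\binom{F}{\theta} = (3\theta - 1)/(\theta + 1) > 1$ for $\theta \geq 2$. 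This eliminates both the tail estimate and the case-by-case verification, at the cost of giving a lower bound ($p_{\theta+1} - p_\theta$, plus whatever positive terms you kept) that is less uniform in $\theta$ than the paper's $1/2 - 2P(Z \leq \theta)$; for the purposes of the lemma, which only needs strict positivity, your version is preferable. One small imprecision: your claim that the chain of inequalities in~\eqref{eq:inner-3} ``holds for every $F$'' is not quite right, since the step that restricts the sum over $j > \theta$ to $J_2 \cup J_3 \cup J_4$ silently drops the term $j = F/2$ when $F$ is even, and the sign of $q_{F/2}(\theta, F)$ is only controlled under $F \geq 4\theta$. This is harmless here because $F = 4\theta - 1$ is odd, so no term is dropped and that step is an identity, but you should say so rather than assert the bound for all $F$.
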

\begin{proof}
 Let $F = 4 \theta - 1$ and observe that
 $$ \begin{array}{rclcl}
      F - 2 \,(\theta + j) & \geq & 4 \theta - 1 - 2 \,(\theta + \theta) \ = \ - 1 & \hbox{for all} & j \in J_1 \vspace*{2pt} \\
      F - 4 \theta + 2 & = & 4 \theta - 1 + 2 \ = \ 1 & \hbox{for all} & j \in J_2. \end{array} $$
 In particular, recalling~\eqref{eq:inner-3}, we obtain
 $$ \begin{array}{rcl}
      E \phi (e) & \geq & \sum_{j \in J_1} \,(F - 2 \,(\theta + j)) \,p_j + \sum_{j \in J_2} \,(F - 4 \theta + 2) \,p_j \vspace*{4pt} \\
                 & \geq & \sum_{j \in J_1} \,(- p_j) + \sum_{j \in J_2} \,p_j \ = \ P \,(Z \in J_2) - P \,(Z \in J_1) \vspace*{4pt} \\ \end{array} $$
 where $Z = \binomial (F, 1/2)$.
 Using that, according to~\eqref{eq:inner-1}, the intervals in~\eqref{eq:inner-2} form a partition of the range of the random
 variable~$Z$ when $F$ is odd, we obtain
 $$ \begin{array}{rcl}
      E \phi (e) & \geq & (1/2)(P \,(Z \in J_2) + P \,(Z \in J_3) - P \,(Z \in J_1) - P \,(Z \in J_4)) \vspace*{4pt} \\
                 & \geq & (1/2)(1 - 2 \,P \,(Z \in J_1) - 2 \,P \,(Z \in J_4)) \ = \ 1/2 - 2 \,P \,(Z \in J_1). \end{array} $$
 Then, using the standard large deviation estimate
 $$ P \,(Z \leq F (1/2 - \ep)) \ \leq \ \exp (- F \ep^2) \quad \hbox{for all} \quad \ep \in (0, 1/2) $$
 and taking $\ep = 13/54$, we deduce that, for all $\theta \geq 7$,
\begin{equation}
\label{eq:boundary-1}
 \begin{array}{rcl}
   E \phi (e) & \geq & 1/2 - 2 \,P \,(Z \leq \theta) \ \geq \ 1/2 - 2 \,P \,(Z \leq (4 \theta - 1)(1/2 - 13/54)) \vspace*{4pt} \\
              & \geq & 1/2 - 2 \,\exp (- (13/54)^2 (4 \theta - 1)) \ > \ 0. \end{array}
\end{equation}
 In addition, explicit calculations for $2 \leq \theta \leq 6$ show that
\begin{equation}
\label{eq:boundary-2}
   E \phi (e) \ \geq \ \bigg(3 \,{7 \choose 0} + \bigg(\frac{9}{7} \bigg) {7 \choose 1} - \bigg(\frac{3}{7} \bigg) {7 \choose 2} + {7 \choose 3} \bigg) \,(1/2)^7 \ = \ 19/64.
\end{equation}
 The lemma follows from combining~\eqref{eq:boundary-1}--\eqref{eq:boundary-2}.
\end{proof} \\ \\
 Putting together Lemmas~\ref{lem:uniform-fix}--\ref{lem:boundary}, we obtain the theorem except for the three-issue threshold one system
 in which case a direct calculation gives
 $$ E \phi (e) \ = \ - {3 \choose 1} (1/2)^3 + \bigg(\frac{1}{3} \bigg) {3 \choose 2} (1/2)^3 + {3 \choose 3} (1/2)^3 \ = \ 0. $$
 To also prove fixation when $\theta = 1$ and $F = 3$, the idea is to slightly improve the definition of our weight function to make
 its expected value strictly positive by also accounting for pairs of initially active particles that form a blockade before jumping onto
 a blockade.
\begin{lemma} --
\label{lem:threshold-one}
 The system with $\theta = 1$ and $F = 3$ fixates.
\end{lemma}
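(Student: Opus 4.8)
The plan is to rerun the argument of Lemmas~\ref{lem:large-deviations} and~\ref{lem:uniform-fix} with a slightly enriched weight, since for $\theta = 1$ and $F = 3$ one has $E \phi (e) = 0$ exactly, so the weight $\phi$ of Lemma~\ref{lem:blockade-weight} just misses the hypothesis of Lemma~\ref{lem:uniform-fix}. The slack is located entirely at the live edges carrying a single particle: for such an edge, only the trivial bound $\cont (e) \geq - 1$ coming from~\eqref{eq:contribution-active} was used, which throws away the active particles that may land on $e$ before its own particle moves. I would recover part of this contribution from the simplest such scenario, in which two initially active particles form a blockade.

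Concretely, for each edge $e$ let $G_e$ be the event that $\zeta_0 (e) = \zeta_0 (e + 1) = 1$ and that the particle initially at $e + 1$ jumps onto $e$ before the particle initially at $e$ makes its first jump. Since $r (1) = 1$, the relevant jumps happen precisely when the relevant Poisson clocks ring in the relevant directions, so $G_e$ is measurable with respect to the initial opinions on a bounded block of edges around $e$ together with finitely many Poisson clocks restricted to $[0, 1]$, say; in particular $P (G_e) > 0$ and $G_e$ is a local event. On $G_e$, the particle arriving from $e + 1$ either annihilates with the particle at $e$ or forms a two-particle blockade at $e$, and in either case one active particle annihilates or becomes frozen as the result of a jump onto $e$ strictly before the first jump of the particle initially at $e$, so~\eqref{eq:contribution-active} improves to $\cont (e) \geq 1 - 1 = 0$. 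Setting $\psi (e) := \phi (e) + \ind \{G_e\}$ -- note that $\ind \{G_e\} = 0$ whenever $e$ is initially a blockade -- we keep $\cont (e) \geq \psi (e)$ with exactly the same status as the bound $\cont (e) \geq \phi (e)$ used in Lemma~\ref{lem:uniform-fix}, while now $E \psi (e) = E \phi (e) + P (G_e) = P (G_e) > 0$.

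It then remains to re-prove the large deviation bound of Lemma~\ref{lem:large-deviations} for $\sum_{e \in (- N, 0)} \psi (e)$ and to repeat the proof of Lemma~\ref{lem:uniform-fix} verbatim. The $\psi (e)$ are bounded, the $\phi (e)$ retain the independence produced by the symmetry argument in the proof of Lemma~\ref{lem:large-deviations}, and each $\ind \{G_e\}$ is a function of a bounded neighborhood of $e$, so the sequence $(\psi (e))_e$ is $m$-dependent for a fixed $m$; splitting $(- N, 0)$ into finitely many sub-sequences of independent terms and running the original computation on each yields constants $C_1 < \infty$ and $c_1 > 0$ with $P (\sum_{e \in (- N, 0)} \psi (e) \leq N (E \psi (e) - \ep)) \leq C_1 \exp (- c_1 N \ep^2)$ for all $\ep > 0$. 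Taking $\ep := E \psi (e) > 0$ and feeding $\cont (e) \geq \psi (e)$ into~\eqref{eq:inclusion} gives $H_N \subset \{\sum_{e \in (l, r)} \psi (e) \leq 0 \ \hbox{for some $l < - N$ and some $r \geq 0$}\}$, and a union bound over $(l, r)$ forces $\lim_{N \to \infty} P (H_N) = 0$, so the system fixates by Lemma~\ref{lem:fixation}. The step I expect to require the most care is the justification of $\cont (e) \geq \psi (e)$: one has to check that the absorption recorded on $G_e$ is genuinely not already counted by $\phi$, and, more importantly, that appending $\ind \{G_e\}$ does not spoil the level-permutation symmetry underlying Lemmas~\ref{lem:blockade-weight} and~\ref{lem:large-deviations} -- which is exactly why it matters that $G_e$ has bounded range.
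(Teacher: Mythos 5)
Your proposal follows essentially the same route as the paper's proof: since $E \phi (e) = 0$ exactly when $\theta = 1$ and $F = 3$, the paper likewise upgrades the weight at initially live edges by crediting the event that another active particle is absorbed at $e$ before the particle initially at $e$ moves, shows this event has probability bounded below by an explicit positive constant ($5/64$), restricts the credit to a sublattice of edges (one in four) to recover independence, and then reruns the arguments of Lemmas~\ref{lem:large-deviations} and~\ref{lem:uniform-fix}. Your accounting is slightly more conservative -- you claim $\cont (e) \geq 0$ on $G_e$ whereas the paper claims $\cont (e) \geq +1$ on its analogous event, counting both the arriving particle and the one already sitting at $e$ as removed from the active pool -- but this costs nothing, since any strictly positive expectation suffices. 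Your $m$-dependence-plus-block-splitting device is also equivalent in substance to the paper's restriction to edges with $e + 1/2 = 0 \bmod 4$.

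The one step that does not survive as written is the assertion that $G_e$ is measurable with respect to a bounded block of initial opinions and finitely many Poisson clocks. As you define it, the event that the particle initially at $e + 1$ jumps onto $e$ before the particle initially at $e$ moves depends on whether either particle has meanwhile been annihilated or frozen by particles arriving from outside, which in turn depends (with exponentially small but nonzero probability) on clocks arbitrarily far away; so $G_e$ is not a finite-range event and the $m$-dependence underlying your large deviation bound fails as stated. The repair is routine and is exactly what the paper does via its events $B_e^{\pm}$ and the ``first jump in a fixed neighborhood'' conditioning: replace $G_e$ by the sub-event on which, in addition, the first arrival among all the clocks attached to the finitely many vertices whose rings can affect the edges $e$ and $e + 1$ is precisely the one producing the jump $e + 1 \to e$ at the level of the particle at $e + 1$. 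This sub-event is genuinely local, has positive probability, still forces the absorption you need (hence still yields $\cont (e) \geq 0$), and is independent of the corresponding events and of the blockade variables $X_{e', j}$ at sufficiently distant edges. With that substitution your argument is the paper's.
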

\begin{proof}
 We define the weight of a blockade as before by setting
\begin{equation}
\label{eq:threshold-one-1}
 \begin{array}{rclcl}
   \phi (e) & := & 2 + 2 \,(X_{e, 2} - 1) \ = \ 2 \,X_{e, 2}     & \hbox{when} & \zeta_0 (e) = 2  \vspace*{4pt} \\
            & := & 3 + 2 \,(X_{e, 3} - 1) \ = \ 2 \,X_{e, 3} + 1 & \hbox{when} & \zeta_0 (e) = 3 \end{array}
\end{equation}
 where the random variables $X_{e, j} = \bernoulli (1 - j / 3)$ are again independent.
 To improve our estimate for the weight of an edge initially occupied by an active particle, we take into account the possibility
 that, before it jumps, this active particle forms a blockade of size two with another active particle.
 Let $A_e$ be such an event for an active particle initially at $e$.
 To compute the probability of this event, we introduce the following two events:
 $$ \begin{array}{rcl}
      B_e^- & := & \hbox{there is initially an active particle at $e - 1$ which is not at the} \vspace*{0pt} \\ && \hbox{same level as the active particle initially at edge $e$} \vspace*{4pt} \\
      B_e^+ & := & \hbox{there is initially an active particle at $e + 1$ which is not at the} \vspace*{0pt} \\ && \hbox{same level as the active particle initially at edge $e$}. \end{array} $$
 Observe that, on the event $B_e^{\pm}$, the event $A_e$ occurs whenever the first jump of an active particle either directed to or starting
 from one of the two edges $e$ and $e \pm 1$ is a jump $e \pm 1 \to e$.
 Since all the active particles jump at the same rate, this gives
\begin{equation}
\label{eq:threshold-one-2}
 \begin{array}{rcl}
    P \,(A_e) & \geq & P \,(A_e \cap (B_e^- \setminus B_e^+)) + P \,(A_e \cap (B_e^+ \setminus B_e^-)) + P \,(A_e \cap (B_e^- \cap B_e^+)) \vspace*{4pt} \\
              & \geq & (1/6) \,P \,(B_e^- \setminus B_e^+) + (1/6) \,P \,(B_e^+ \setminus B_e^-) + (2/8) \,P \,(B_e^- \cap B_e^+). \end{array}
\end{equation}
 Independence and basic counting also imply that
\begin{equation}
\label{eq:threshold-one-3}
 \begin{array}{rcl}
    P \,(B_e^- \setminus B_e^+) & = & P \,(B_e^-) (1 - P \,(B_e^+)) \ = \ (2/8) \times (6/8) \ = \ 3/16  \vspace*{4pt} \\
    P \,(B_e^+ \setminus B_e^-) & = & P \,(B_e^+) (1 - P \,(B_e^-)) \ = \ (2/8) \times (6/8) \ = \ 3/16  \vspace*{4pt} \\
    P \,(B_e^- \cap B_e^+)      & = & P \,(B_e^-) \,P \,(B_e^+)     \ = \ (2/8) \times (2/8) \ = \ 1/16. \end{array}
\end{equation}
 Combining~\eqref{eq:threshold-one-2}--\eqref{eq:threshold-one-3}, we deduce that
 $$ P \,(A_e) \ \geq \ (1/6) \times (3/16) + (1/6) \times (3/16) + (2/8) \times (1/16) \ = \ 5/64. $$
 Note also that the events in~\eqref{eq:threshold-one-2} for different edges~$e$~and~$e'$ are independent whenever the two edges are at least
 distance four apart therefore our previous stochastic lower bound for the contribution of an active particle can be improved by setting
\begin{equation}
\label{eq:threshold-one-4}
 \begin{array}{rclcl}
   \phi (e) & := & - 1                       & \hbox{when} & \zeta_0 (e) = 1 \ \ \hbox{and} \ \ e + 1/2 \neq 0 \mod 4  \vspace*{4pt} \\
            & := & X_{e, 1} - (1 - X_{e, 1}) & \hbox{when} & \zeta_0 (e) = 1 \ \ \hbox{and} \ \ e + 1/2 = 0 \mod 4 \end{array}
\end{equation}
 where the random variables $X_{e, 1} = \bernoulli (5/64)$ are independent.
 Using the independence of these random variables, our proof of the large deviation estimates in Lemma~\ref{lem:large-deviations} easily
 extends to the weight function defined in~\eqref{eq:threshold-one-1}~and~\eqref{eq:threshold-one-4} and we get:
 for all $\ep > 0$,
 $$ \begin{array}{l} P \,(\sum_{e \in (- N, 0)} \phi (e) \leq N (m - \ep)) \ \leq \ C_1 \exp (- c_1 N) \end{array} $$
 for suitable $C_1 < \infty$ and $c_1 > 0$, where
 $$ \begin{array}{rrl}
      m & := & ((-3/4) + (1/4) \,E \,(2 X_{e, 1} - 1)) \,P \,(\zeta_0 (e) = 1) \vspace*{4pt} \\ && \hspace*{25pt} + \
                E \,(2 X_{e, 2}) \,P \,(\zeta_0 (e) = 2) \ + \ E \,(2 X_{e, 3} + 1) \,P \,(\zeta_0 (e) = 3) \vspace*{4pt} \\
        &  = & ((-3/4) + (1/4)(10 / 64 - 1))(3/8) \vspace*{4pt} \\ && \hspace*{25pt} + \
                2 \,(1 - 2/3)(3/8) + 1/8 \ = \ 15/1024 \ > \ 0. \end{array} $$
 In particular, fixation follows from the argument in the proof of Lemma~\ref{lem:uniform-fix}.
\end{proof} \\ \\
 The previous two lemmas imply fixation under the assumptions of Theorem~\ref{th:uniform-fix}.
 More precisely, our proof implies that each edge initially occupied by a blockade has a positive probability of never being updated
 which, in turn, implies that the two nearest neighbors on both sides of the blockade never update their opinion.
 Since each of the opinion profiles is equally likely to appear on both sides of the blockade, we deduce that the system fixates in
 a configuration where all the opinion profiles are present: the system coexists due to fixation.


\section{Proof of Theorem~\ref{th:biased-fix}}\label{sec:biased-fix}

\indent We now assume that the system starts from the product measure~\eqref{eq:biased}.
 Our approach to study fixation in this case is similar to the one in the previous section, the only additional difficulty being to
 extend the large deviation estimates in Lemma~\ref{lem:large-deviations} to non-uniform initial distributions where the number of
 particles at adjacent edges are no longer independent.
 In particular, the number of edges in a given interval and with a given initial number of particles is no longer a binomial random
 variable.
 In order to simplify the calculations, we define the weight function in the worst case scenario assuming that all the particles
 initially active never become frozen, i.e., we set all the random variables $X_j$ introduced in Lemma~\ref{lem:blockade-weight}
 equal to zero so that
\begin{equation}
\label{eq:biased-weight}
  \begin{array}{rclclc}
   \phi (e) & := & - j            & \hbox{when} & \zeta_0 (e) = j \leq \theta \vspace*{4pt} \\
            & := &   j - 2 \theta & \hbox{when} & \zeta_0 (e) = j > \theta. \end{array}
\end{equation}
 Denote the initial densities of opinion as
 $$ \rho (u) \ := \ P \,(\eta_0 (x) = u) \quad \hbox{for all} \quad u \in \Gamma. $$
 To extend Lemma~\ref{lem:large-deviations} to such product measures, we first study
 $$ e_N (u, v) \ := \ \card \,\{x \in [- N, 0] : \eta_0 (x) = u \ \hbox{and} \ \eta_0 (x + 1) = v \} \quad \hbox{for} \quad u, v \in \Gamma $$
 the number of edges connecting individuals with opinion~$u$~and~$v$, respectively.
 The next lemma gives large deviation estimates for the number of such edges which itself relies on large deviation estimates
 for the number of changeovers in a sequence of independent coin flips.
\begin{lemma} --
\label{lem:edge}
 There exist $C_2 < \infty$ and $c_2 > 0$ such that, for all $\ep > 0$ small,
 $$ P \,(e_N (u, v) - N \rho (u) \rho (v) \notin (- \ep N, \ep N)) \ \leq \ C_2 \exp (- c_2 N \ep^2) \quad \hbox{for all} \quad u \neq v. $$
\end{lemma}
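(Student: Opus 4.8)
The plan is to reduce $e_N(u,v)$ to sums of genuinely independent Bernoulli variables by exploiting the fact that, under a product measure, the family of indicators $\ind \{\eta_0(x) = u, \,\eta_0(x+1) = v\}$ (indexed by the edge $(x,x+1)$) is only $1$-dependent, and $1$-dependence is killed by discarding every other edge. Concretely, I would write $e_N(u,v) = S_N^{\mathrm{even}} + S_N^{\mathrm{odd}}$, where $S_N^{\mathrm{even}}$ (resp. $S_N^{\mathrm{odd}}$) is the sum of $\ind \{\eta_0(x) = u, \,\eta_0(x+1) = v\}$ over those $x \in [-N,0]$ with $x$ even (resp. odd). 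For two such $x$ of the same parity the underlying vertex pairs $\{x, x+1\}$ and $\{x', x'+1\}$ are disjoint, so within each parity class the indicators are independent and identically distributed $\bernoulli(\rho(u)\rho(v))$; hence each of $S_N^{\mathrm{even}}$ and $S_N^{\mathrm{odd}}$ is, marginally, a binomial random variable with $\asymp N/2$ trials and success probability $\rho(u)\rho(v)$, and $E\,S_N^{\mathrm{even}} + E\,S_N^{\mathrm{odd}} = N\rho(u)\rho(v)$ up to an $O(1)$ correction coming from the endpoints of the interval.

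Next, by the triangle inequality the event $\{|e_N(u,v) - N\rho(u)\rho(v)| \geq \ep N\}$ is contained in the union of $\{|S_N^{\mathrm{even}} - E\,S_N^{\mathrm{even}}| \geq \ep N/2\}$ and $\{|S_N^{\mathrm{odd}} - E\,S_N^{\mathrm{odd}}| \geq \ep N/2\}$; note that no joint independence of the two halves is required, only a union bound, which is fortunate since each half does in fact depend on vertices of both parities. I would then apply the standard Chernoff/Hoeffding bound for the binomial distribution --- the same one already invoked in \eqref{eq:uniform-fix-1} --- to each of the two pieces, which gives $P(|S_N^{\mathrm{even}} - E\,S_N^{\mathrm{even}}| \geq \ep N/2) \leq 2\exp(-cN\ep^2)$ and likewise for the odd part, with an absolute constant $c > 0$ (Hoeffding's inequality in particular provides a bound whose constant does not depend on the success probability). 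Summing the two contributions yields the claim with, say, $C_2 = 4$ and $c_2 = c$; since these constants depend only on $F$ and not on the pair $(u,v)$, the estimate holds uniformly over the finitely many $u \neq v$, and this uniform version is exactly what is needed to feed into the extension of Lemma~\ref{lem:large-deviations}.

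The argument is essentially routine, and the hypothesis $u \neq v$ plays no role in it --- it is present only because those are the edges that carry particles in the coupling. The only points requiring mild care are the bookkeeping of the $O(1)$ discrepancy between the number of trials $\lfloor N/2\rfloor + \lceil N/2\rceil$ and the centering $N\rho(u)\rho(v)$ (harmless after slightly enlarging $C_2$ and shrinking $c_2$), and confirming that the exponential rate can be chosen uniformly over $\rho(u)\rho(v) \in (0,1)$. An equivalent route, which is presumably what motivates the ``changeover'' phrasing, is to first condition on the random set of edges whose two endpoints carry distinct opinions --- itself a sum of two binomial contributions over the parity classes, hence concentrated --- and then thin within that set; but the direct parity decomposition above avoids the extra layer of conditioning. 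I do not expect any genuine obstacle here: the one real idea is the observation that splitting by parity turns the $1$-dependent family of edge indicators into two binomial sums, after which everything is a union bound and a Chernoff estimate.
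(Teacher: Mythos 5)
Your proof is correct, but it takes a genuinely different route from the paper's. You split $e_N(u,v)$ by the parity of $x$, observe that within each parity class the indicators $\ind\{\eta_0(x)=u,\,\eta_0(x+1)=v\}$ involve pairwise disjoint vertex sets and are therefore i.i.d.\ $\bernoulli(\rho(u)\rho(v))$, and finish with a union bound plus Hoeffding; this is self-contained, needs no joint independence of the two halves, and (as you note) the centering is in fact exact since the two parity classes together exhaust the $N$ edges, so even the $O(1)$ bookkeeping evaporates. The paper instead proceeds in two stages: it first controls $\sum_{w\neq u} e_N(u,w)$ by identifying it with the number of changeovers in a sequence of independent coin flips with success probability $\rho(u)$ and importing a large deviation estimate for changeovers from their earlier work (Lemma~7 of the constrained voter model paper), and then, conditionally on that count being $K$, uses that $e_N(u,v)$ is $\binomial(K,\rho(v)(1-\rho(u))^{-1})$ — each changeover from $u$ is independently followed by $v$ with that probability — before applying the binomial tail bounds of~\eqref{eq:uniform-fix-1}. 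You correctly anticipated this as the ``equivalent route'' motivating the changeover phrasing. Your parity decomposition is the more elementary argument and avoids both the external lemma and the layer of conditioning (which in the paper forces some care with the constants $K_\pm$ and the $\rho$-dependent exponents in \eqref{eq:edge-4}--\eqref{eq:edge-5}); the paper's version has the mild advantage of reusing machinery already developed for their related models. Either way the constants are uniform over the finitely many pairs $u\neq v$, which is all that Lemma~\ref{lem:biased-fix} requires.
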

\begin{proof}
 Let $X_0, X_1, \ldots, X_N$ be independent coin flips and let~$Z_N$ be the corresponding number of changeovers, i.e., the number of pairs of
 consecutive flips resulting in different outcomes:
 $$ Z_N \ := \ \card \,\{j = 0, 1, \ldots, N - 1 : X_j \neq X_{j + 1} \}. $$
 According to Lemma~7 in \cite{lanchier_scarlatos_2014}, there exist $C_3 < \infty$ and $c_3 > 0$ such that
\begin{equation}
\label{eq:edge-1}
 \begin{array}{l}
   P \,(Z_N - 2 N \,(1 - p) \,p \notin (- \ep N / 2, \ep N / 2)) \vspace*{4pt} \\ \hspace*{25pt} = \
   P \,(Z_N - E Z_N \notin (- \ep N / 2, \ep N / 2)) \ \leq \ C_3 \exp (- c_3 N \ep^2) \quad \hbox{for all} \quad \ep > 0 \end{array}
\end{equation}
 where $p$ is the probability that the coin comes up heads.
 Now, we observe that, for any $u$, the number of edges connecting an individual with initial opinion $u$ to an individual with a different
 opinion is equal in distribution to the number of changeovers when $p = \rho (u)$.
 In particular, the large deviation estimate in~\eqref{eq:edge-1} implies that, for all $u \in \Gamma$,
\begin{equation}
\label{eq:edge-2}
  \begin{array}{l} P \,(\sum_{v \neq u} \,e_N (u, v) - N \rho (u) \,(1 - \rho (u)) \notin (- \ep N / 2, \ep N / 2)) \ \leq \ C_3 \exp (- c_3 N \ep^2). \end{array}
\end{equation}
 Since each individual with initial opinion~$u$ preceding a changeover is independently followed by any of the remaining $2^F - 1$ opinions,
 we also have
\begin{equation}
\label{eq:edge-3}
  \begin{array}{l} e_N (u, v) = \binomial (K, \rho (v) (1 - \rho (u))^{-1}) \quad \hbox{on the event} \quad \sum_{w \neq u} \,e_N (u, w) = K. \end{array}
\end{equation}
 Letting $K_+ := N \rho (u) (1 - \rho (u)) + \ep N / 2$, observing that, for $\ep > 0$ small,
 $$ K_+ \,(\rho (v) (1 - \rho (u))^{-1} + (1/4) \,\rho (u)^{-1} (1 - \rho (u))^{-1} \,\ep) \ \leq \ N \,(\rho (u) \rho (v) + \ep) $$
 and combining \eqref{eq:edge-2}--\eqref{eq:edge-3} with the large deviation estimates~\eqref{eq:uniform-fix-1}, we get
\begin{equation}
\label{eq:edge-4}
  \begin{array}{l}
    P \,(e_N (u, v) - N \rho (u) \rho (v) \geq \ep N) \vspace*{4pt} \\ \hspace*{20pt} \leq \
    P \,(\sum_{w \neq u} \,e_N (u, w) - N \rho (u) (1 - \rho (u)) \geq \ep N / 2) \vspace*{4pt} \\ \hspace*{20pt} + \
    P \,(e_N (u, v) - N \rho (u) \rho (v) \geq \ep N \ | \ \sum_{w \neq u} \,e_N (u, w) - N \rho (u) (1 - \rho (u)) < \ep N / 2) \vspace*{4pt} \\ \hspace*{20pt} \leq \
    C_3 \exp (- c_3 N \ep^2) + P \,(\binomial (K_+, \rho (v) (1 - \rho (u))^{-1}) \geq N \,(\rho (u) \rho (v) + \ep)) \vspace*{4pt} \\ \hspace*{20pt} \leq \
    C_3 \exp (- c_3 N \ep^2) + \exp (- (1/32) \,\rho (u)^{-2} (1 - \rho (u))^{-2} \,K_+ \,\ep^2). \end{array}
\end{equation}
 Similarly, letting $K_- := N \rho (u) (1 - \rho (u)) - \ep N / 2$, we have
 $$ K_- \,(\rho (v) (1 - \rho (u))^{-1} - (1/4) \,\rho (u)^{-1} (1 - \rho (u))^{-1} \,\ep) \ \geq \ N \,(\rho (u) \rho (v) - \ep) $$
 and the same reasoning as in~\eqref{eq:edge-4} gives
\begin{equation}
\label{eq:edge-5}
  \begin{array}{l}
    P \,(e_N (u, v) - N \rho (u) \rho (v) \leq - \ep N) \vspace*{4pt} \\ \hspace*{20pt} \leq \
    C_3 \exp (- c_3 N \ep^2) + P \,(\binomial (K_-, \rho (v) (1 - \rho (u))^{-1}) \leq N \,(\rho (u) \rho (v) - \ep)) \vspace*{4pt} \\ \hspace*{20pt} \leq \
    C_3 \exp (- c_3 N \ep^2) + \exp (- (1/32) \,\rho (u)^{-2} (1 - \rho (u))^{-2} \,K_- \,\ep^2). \end{array}
\end{equation}
 The lemma follows from combining \eqref{eq:edge-4}--\eqref{eq:edge-5}.
\end{proof}
\begin{lemma} --
\label{lem:biased-fix}
 The system fixates whenever $E \phi (e) > 0$.
\end{lemma}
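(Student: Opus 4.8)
The plan is to follow the proof of Lemma~\ref{lem:uniform-fix} almost verbatim; the only substantive change is to replace the large deviation estimate of Lemma~\ref{lem:large-deviations}, which relied on the fact that the numbers of particles on distinct edges are independent, by one valid under the biased product measure~\eqref{eq:biased}. The starting point is the observation that, in contrast with the uniform case, the worst-case weight~\eqref{eq:biased-weight} carries no additional randomness: $\phi (e)$ is a deterministic function $\phi_j$ of the initial number of particles $\zeta_0 (e) = H (\eta_0 (e - 1/2), \eta_0 (e + 1/2))$, with $\phi_j = -j$ for $j \leq \theta$ and $\phi_j = j - 2 \theta$ for $j > \theta$. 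Writing $W_N := \sum_{e \in (-N, 0)} \phi (e)$, I would therefore decompose
$$ W_N \ = \ \sum_{u \neq v} \,\phi_{H (u, v)} \,e_N (u, v), $$
where the diagonal terms are absent because $\phi_0 = 0$, and record that $E \phi (e) = \sum_{u, v} \phi_{H (u, v)} \,\rho (u) \,\rho (v)$.

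The second step upgrades Lemma~\ref{lem:edge} to a large deviation estimate for $W_N$. Since $\Gamma$ is finite there are only finitely many ordered pairs $(u, v)$ with $u \neq v$, so a union bound shows that, outside an event of probability at most $C \exp (-c N \delta^2)$, every $e_N (u, v)$ lies within $\delta N$ of its mean $N \rho (u) \rho (v)$; on the complementary event one has $|W_N - N E \phi (e)| \leq M \delta N$ with $M := \sum_{u \neq v} |\phi_{H (u, v)}|$ a finite constant. Taking $\delta := \ep / M$ then produces constants $C_4 < \infty$ and $c_4 > 0$ with
$$ P \,(W_N \leq N (E \phi (e) - \ep)) \ \leq \ C_4 \exp (-c_4 N \ep^2) \qquad \hbox{for all small} \ \ep > 0, $$
and, by translation invariance of~\eqref{eq:biased}, the same bound holds with $r - l$ in place of $N$ for the weight $\sum_{e \in (l, r)} \phi (e)$ of any interval. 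This plays the role of Lemma~\ref{lem:large-deviations}.

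With this estimate available, I would repeat the proof of Lemma~\ref{lem:uniform-fix}. Fix $\ep := E \phi (e) > 0$; since the event $\{W_N \leq N (E \phi (e) - \ep)\}$ is decreasing in $\ep$, the displayed bound --- applied at some small $\ep_0 < E \phi (e)$ if $E \phi (e)$ exceeds the threshold of Lemma~\ref{lem:edge} --- yields a summable bound for $P \,(\sum_{e \in (l, r)} \phi (e) \leq 0)$. Recalling that the proof of Lemma~\ref{lem:blockade-weight} in fact establishes $\cont (e) \geq \phi (e)$ almost surely edge by edge for the worst-case weight~\eqref{eq:biased-weight} (there the relevant Bernoulli variables are nonnegative, so setting them to zero only lowers the bound), the chain of inclusions~\eqref{eq:inclusion} gives
$$ H_N \ \subset \ \{\sum_{e \in (l, r)} \phi (e) \leq 0 \ \hbox{for some} \ l < -N \ \hbox{and some} \ r \geq 0 \}. $$
Summing the large deviation bound over these intervals --- of which there are at most $n$ of length $n$, all with $n > N$ --- gives $P \,(H_N) \leq \sum_{l < -N} \sum_{r \geq 0} C_4 \exp (-c_4 (r - l) \ep^2) \to 0$ as $N \to \infty$, so the system fixates by Lemma~\ref{lem:fixation}.

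The main obstacle is the large deviation estimate for $W_N$: under~\eqref{eq:biased} the numbers of particles on adjacent edges are correlated, so the clean binomial structure exploited in Lemma~\ref{lem:large-deviations} is no longer available. This is exactly the difficulty that Lemma~\ref{lem:edge} was set up to overcome, by reducing the count of $(u, v)$-edges to the number of changeovers in a sequence of independent coin flips; once that lemma is in hand, everything else is a routine transcription of the uniform-case argument, apart from the minor bookkeeping point above concerning the admissible range of $\ep$.
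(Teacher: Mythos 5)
Your proposal is correct and follows essentially the same route as the paper: both decompose the interval weight as $\sum_{u \neq v} h(u,v)\, e_N(u,v)$ with $h$ a deterministic function of the Hamming distance, apply Lemma~\ref{lem:edge} with a union bound over the finitely many ordered pairs $(u,v)$ to obtain the large deviation estimate, and then transcribe the argument of Lemma~\ref{lem:uniform-fix}. Your remarks about the worst-case weight remaining a valid stochastic lower bound for $\cont(e)$ and about restricting $\ep$ to the range where Lemma~\ref{lem:edge} applies match what the paper does implicitly.
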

\begin{proof}
 For all $u, v \in \Gamma$, we set
 $$ \begin{array}{rclcl}
      h (u, v) & := & - H (u, v)          & \hbox{when} & H (u, v) = \card \,\{i : u_i \neq v_i \} \leq \theta  \vspace*{4pt} \\
               & := & H (u, v) - 2 \theta & \hbox{when} & H (u, v) = \card \,\{i : u_i \neq v_i \} > \theta \end{array} $$
 and observe that
 $$ \begin{array}{l}
     \sum_{e \in (- N, 0)} \,(\phi (e) - E \phi (e)) \ = \
     \sum_{e \in (- N, 0)} \phi (e) - N E \phi (e) \vspace*{4pt} \\ \hspace*{25pt} = \
     \sum_{u \neq v} \,h (u, v) \,e_N (u, v) - N \,\sum_{u \neq v} \,h (u, v) \,P \,(\eta_0 (x) = u \ \hbox{and} \ \eta_0 (x + 1) = v) \vspace*{4pt} \\ \hspace*{25pt} = \
     \sum_{u \neq v} \,h (u, v) \,(e_N (u, v) - N \rho (u) \rho (v)). \end{array} $$
 Then, letting $m := \max_{u, v} |h (u, v)| = \max (\theta, F - 2 \theta)$ and applying Lemma~\ref{lem:edge}, we get
\begin{equation}
\label{eq:biased-fix-1}
 \begin{array}{l}
   P \,(\sum_{e \in (- N, 0)} \,(\phi (e) - E \phi (e)) \notin (- \ep N, \ep N)) \vspace*{4pt} \\ \hspace*{20pt} = \
   P \,(\sum_{u \neq v} \,h (u, v) \,(e_N (u, v) - N \rho (u) \rho (v)) \notin (- \ep N, \ep N)) \vspace*{4pt} \\ \hspace*{20pt} \leq \
   P \,(e_N (u, v) - N \rho (u) \rho (v) \notin (- \ep N / m F^2, \ep N / m F^2) \ \hbox{for some} \ u \neq v) \vspace*{4pt} \\ \hspace*{20pt} \leq \
   C_2 \,F^2 \,\exp (- c_2 N \ep^2 / m^2 F^4) \end{array}
\end{equation}
 for all $\ep > 0$ small.
 Finally, we fix $\ep \in (0, E \phi (e))$ small enough so that \eqref{eq:biased-fix-1} holds and follow the same reasoning as
 in Lemma~\ref{lem:uniform-fix} to deduce that
 $$ \begin{array}{rcl}
      \lim_{N \to \infty} P \,(H_N) & \leq &
      \lim_{N \to \infty} P \,(\sum_{e \in (l, r)} \phi (e) \leq 0 \ \hbox{for some $l < - N$ and some $r \geq 0$}) \vspace*{4pt} \\ & \leq &
      \lim_{N \to \infty} \sum_{l < - N} \,\sum_{r > 0} \,P \,(\sum_{e \in (l, r)} \phi (e) \leq 0) \vspace*{4pt} \\ & \leq &
      \lim_{N \to \infty} \sum_{l < - N} \,\sum_{r > 0} \,P \,(\sum_{e \in (l, r)} (\phi (e) - E \phi (e)) \leq - \ep (r - l)) \vspace*{4pt} \\ & \leq &
      \lim_{N \to \infty} \sum_{l < - N} \,\sum_{r > 0} \,C_2 \,F^2 \,\exp (- c_2 (r - l) \ep^2 / m^2 F^4) \ = \ 0. \end{array} $$
 As in Lemma~\ref{lem:uniform-fix}, we deduce fixation from Lemma \ref{lem:fixation}.
\end{proof} \\ \\
 In view of Lemma~\ref{lem:biased-fix}, the last step to complete the proof of the theorem is to show the positivity of the expected
 value of the weight function when $F > 2 \theta$ and the system starts from the product measure~\eqref{eq:biased} with~$\rho > 0$ small.
 This is done in the next lemma.
\begin{lemma} --
\label{lem:biased-explicit}
 Assume~\eqref{eq:biased}~and~$F > 2 \theta$.
 Then, $E \phi (e) > 0$ for $\rho > 0$ small.
\end{lemma}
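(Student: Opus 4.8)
The plan is to compute $E \phi (e)$ under the product measure~\eqref{eq:biased} and exploit the fact that it depends continuously -- indeed polynomially -- on the small parameter~$\rho$, so that it suffices to evaluate it in the degenerate limit $\rho \to 0$. First I would record that, under~\eqref{eq:biased}, the opinions $\eta_0 (x)$ and $\eta_0 (x + 1)$ at the two endpoints of~$e$ are independent with common law $\rho (\cdot)$, so the number of initial particles $\zeta_0 (e) = H (\eta_0 (x), \eta_0 (x + 1))$ has a distribution $p_j := P \,(\zeta_0 (e) = j)$ each of whose entries is a polynomial in~$\rho$. Recalling the definition~\eqref{eq:biased-weight} of the weight function, this gives
$$ E \phi (e) \ = \ - \sum_{1 \leq j \leq \theta} \,j \,p_j \ + \ \sum_{\theta < j \leq F} \,(j - 2 \theta) \,p_j, $$
which is therefore a polynomial, and in particular a continuous function, of~$\rho$.

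Next I would identify the $\rho \to 0$ limit of the law of $\zeta_0 (e)$. When $\rho = 0$ the measure~\eqref{eq:biased} is supported on the two opposite profiles $u_-$ and $u_+$, each carried with probability one half, and since $H (u_-, u_+) = F$ the two endpoints of~$e$ are then at Hamming distance~$0$ with probability one half (both carry the same designated profile) and at Hamming distance~$F$ with probability one half (they carry opposite profiles). Equivalently, for every $0 < j < F$ the event $\{\zeta_0 (e) = j\}$ forces at least one endpoint to carry a non-designated profile, whence $p_j = O (\rho)$, whereas $p_0 = p_F = 1/2 + O (\rho)$. Substituting these estimates into the displayed formula yields $E \phi (e) = (1/2)(F - 2 \theta) + O (\rho)$ as $\rho \to 0$.

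Since $F > 2 \theta$ by hypothesis, the leading term $(1/2)(F - 2 \theta)$ is strictly positive, so by continuity of $\rho \mapsto E \phi (e)$ there exists $\rho_0 > 0$ such that $E \phi (e) > 0$ for all $\rho \in (0, \rho_0)$, which proves the lemma. There is essentially no obstacle in this argument; the only point that deserves a line of care is the correct identification of the degenerate law of $\zeta_0 (e)$ at $\rho = 0$, namely that it concentrates on the two extreme values $0$ and~$F$ with equal weight rather than, say, on~$0$ alone -- this is exactly what makes the positive contribution $(F - 2 \theta) \,p_F$ survive in the limit and dominate the negative contributions, all of which come from the $O (\rho)$ intermediate values of~$\zeta_0 (e)$.
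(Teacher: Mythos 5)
Your proposal is correct and follows essentially the same route as the paper: both arguments reduce to the observation that $E \phi (e)$ is a polynomial in $\rho$ whose constant term equals $(1/2)(F - 2\theta) > 0$, the paper obtaining this by explicitly counting pairs $(u, v)$ at each Hamming distance while you read it off from the degenerate $\rho = 0$ law of $\zeta_0 (e)$ concentrated on $\{0, F\}$ with equal weights. Your continuity phrasing is a slightly cleaner packaging of the same computation, and your identification of the limiting law and of the $O(\rho)$ bound on the intermediate $p_j$ is accurate.
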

\begin{proof}
 To begin with, we observe that
\begin{equation}
\label{eq:biased-explicit-1}
  \begin{array}{l}
    P \,(\zeta_0 (e) = j) \ = \ \sum_{H (u, v) = j} \,P \,(\eta_0 (x) = u \ \hbox{and} \ \eta_0 (x + 1) = v) \end{array}
\end{equation}
 and that, under the assumption~\eqref{eq:biased},
 $$ \begin{array}{l}
      N_j \ := \ \card \,\{(u, v) \in \Gamma^2 : H (u, v) = j \} \vspace*{0pt} \\ \hspace*{50pt} = \ \displaystyle 2^F \card \,\{v \in \Gamma : H (u_-, v) = j \} \ = \ 2^F \,{F \choose j}. \end{array} $$
 Among the pairs with $H (u, v) = F$, exactly two include both $u_-$ and $u_+$ whereas all the other pairs do not include any
 of these two opinions, which, together with~\eqref{eq:biased-explicit-1}, implies
\begin{equation}
\label{eq:biased-explicit-2}
  P \,(\zeta_0 (e) = F) \ = \ 2 \,\rho (u_-)^2 + (2^F - 2) \,\rho^2
\end{equation}
 while among the pairs with $H (u, v) = j < F$, exactly $4 \,(F \ \hbox{choose} \ j)$ include either $u_-$ or $u_+$ whereas all the other pairs do not include any
 of these two opinions, therefore \eqref{eq:biased-explicit-1} implies
\begin{equation}
\label{eq:biased-explicit-3}
  P \,(\zeta_0 (e) = j) \ = \ 4 \,{F \choose j} \,\rho (u_-) \rho + (2^F - 4) {F \choose j} \,\rho^2.
\end{equation}
 Recalling~\eqref{eq:biased}~and~\eqref{eq:biased-weight} and combining~\eqref{eq:biased-explicit-2}--\eqref{eq:biased-explicit-3}, we deduce
 $$ \begin{array}{rcl}
      E \phi (e) & = & \displaystyle \sum_{j = 0}^{\theta} \ (-j) \left(4 \,{F \choose j} \bigg(\frac{1}{2} - (2^{F - 1} - 1) \,\rho \bigg) \,\rho + (2^F - 4) {F \choose j} \,\rho^2 \right) \vspace*{0pt} \\ && \hspace{10pt} + \
                       \displaystyle \sum_{j = \theta + 1}^{F - 1} (j - 2 \theta) \left(4 \,{F \choose j} \bigg(\frac{1}{2} - (2^{F - 1} - 1) \,\rho \bigg) \,\rho + (2^F - 4) \,{F \choose j} \,\rho^2 \right) \vspace*{0pt} \\ && \hspace{10pt} + \
                       \displaystyle (F - 2 \theta) \left(2 \,\bigg(\frac{1}{2} - (2^{F - 1} - 1) \,\rho \bigg)^2 + (2^F - 2) \,\rho^2 \right). \end{array} $$
 In particular, as a function of the parameter $\rho$, the expected weight is a degree two polynomial with constant term~$(1/2)(F - 2 \theta) > 0$
 therefore it is positive for $\rho > 0$ small.
\end{proof} \\ \\
 Fixation under the assumptions of Theorem~\ref{th:biased-fix} directly follows from Lemma~\ref{lem:fixation} and the previous two lemmas.
 To deduce that the one-dimensional system coexists, we use again the argument following Lemma~\ref{lem:threshold-one} at the end of the previous section.


\end{document}